\documentclass[12pt,oneside,reqno]{amsart}
\usepackage[all]{xy}
\usepackage{amsfonts,amsmath,oldgerm,amssymb,amscd}
\UseComputerModernTips
\numberwithin{equation}{section}
\usepackage[breaklinks]{hyperref}
\allowdisplaybreaks

\usepackage{enumerate}
\usepackage{caption} 
\usepackage{enumitem}   
\usepackage{cleveref}
\usepackage{comment}

\newcommand{\PP}{{\mathbb{P}}}
\newtheorem{theorem}{Theorem}[section]
\newtheorem{proposition}[theorem]{Proposition}
\newtheorem{lemma}[theorem]{Lemma}
\newtheorem{coro}[theorem]{Corollary}
\newtheorem{conjecture}[theorem]{Conjecture}
\theoremstyle{definition}

\newcommand{\del}{\delta}

\newcommand{\Q}{\mbox{$\mathbb Q$}}
\newcommand{\R}{\mbox{$\mathbb R$}}     % For Real numbers
\newcommand{\C}{\mbox{$\mathbb C$}}     %For Complex numbers

\newcommand{\hhat}{h}  

\begin{document}

	\title[Quantitative bounds on integrality for  post-critically finite maps]{Quantitative bounds on integrality for  post-critically finite maps} %\\ \today}

\author[R. Padhy]{R. Padhy}
\address{Rudranarayan Padhy, Department of Mathematics, National Institute of Technology Calicut, 
	Kozhikode-673 601, India.}
\email{rudranarayan\_p230169ma@nitc.ac.in; padhyrudranarayan1996@gmail.com}

\author[S. S. Rout]{S. S. Rout}
\address{Sudhansu Sekhar Rout, Department of Mathematics, National Institute of Technology Calicut, 
	Kozhikode-673 601, India.}
\email{sudhansu@nitc.ac.in; lbs.sudhansu@gmail.com}

\thanks{2020 Mathematics Subject Classification: Primary 37F10, Secondary 37P30, 11G50, 11J86. \\
Keywords: Post-critically finite maps, Mandelbrot set, integrality, bifurcation measure, equidistribution.}

\begin{abstract}
Let $K$ be a number field with algebraic closure $\overline{K}$ and let $S$ be a finite set of places of $K$ that contain all the archimedean places. For an integer $d \ge 2$, consider the unicritical polynomial family $f_{d,c}(z) = z^d + c$. Recently, Benedetto and Ih studied the distribution of post-critically finite parameters $c$ that are $S$-integral relative to a fixed point $\alpha \in \overline{K}$ such that $f_{d, \alpha}$ is not post-critically finite. In this paper, we study the quantitative aspects of their result. In particular, under some additional assumptions we establish quantitative bounds on the number of $S$-integral post-critically finite parameters in the generalized Mandelbrot set $\mathcal{M}_{d, v}$ relative to a non post-critically finite parameter $\alpha$ as $\alpha$ varies over number fields of bounded degree.  
\end{abstract}

\maketitle
\pagenumbering{arabic}
\pagestyle{headings}

\section{Introduction}
Let $K$ be a field with algebraic closure $\overline{K}$ and let $f(z) \in K(z)$ be a rational function of degree $d\geq 2$ defined over $K$. We define the $n$-fold composition of $f$ with itself by $f^{n}(z) := f \circ f\circ \cdots \circ f(z)$ with $f^0(z) := z$. The forward orbit of a point $z \in \mathbb{P}^1(\overline{K})$ is the set $\{ f^n(z) : n \ge 0 \}$ and the strict forward orbit of $z$ is the set $\{ f^n(z) : n \ge 1 \}$. We say that $z$ is {\em periodic} under $f$ if $f^n(z) = z$ for some $n \ge 1$. Similarly, $z$ is {\em preperiodic} under $f$ if there exist integers $n > m \ge 0$ such that $f^n(z) = f^m(z)$. Again $z$ is {\em strictly preperiodic} if it is preperiodic but not periodic. We say that $f$ is {\em post-critically finite}  if the orbit of the critical points of $f$ under iteration is finite. Post-critically finite maps play a fundamental role in complex and arithmetic dynamics, because many dynamical features of $f$ can be obtained from the behavior of the critical points under iteration (see, for example, \cite{BD13, BFHJY, Benedetto24, Benedetto2014, Buff18, Eps12}). One broad class of examples of post-critically finite maps is the flexible Latt\`es maps, obtained by descending an endomorphism of a torus to the projective line. Other than these examples, post-critically finite maps are relatively sparse.

Consider post-critically finite parameters in the one-parameter family of unicritical polynomials  
\[f_{d,c}(z) := z^d + c,\] 
where $d \geq 2$ is an integer and $c\in \mathbb{P}^1(\overline{K})$. Observe that the polynomial $f_{d,c}$ has critical points at $z = 0, \infty$. Since $\infty$ is fixed, it follows that $f_{d,c}$ is post-critically finite if and only if the forward orbit of the critical point $z = 0$,
$$\{f^n_{d,c}(0) : n \geq 0\}$$
is a finite set. Since $c$ is a root of the polynomial $f^n_{d,c}(0)-f^m_{d,c}(0)$  for some integers $n > m \geq 0$, so post-critically finite parameter $c\in \overline{\Q}$. In fact, $c$ is an algebraic integer. For any fixed integers $d \geq 2$ and $B \geq 1$, it is known that there are only finitely many conjugacy classes of post-critically finite rational maps of degree $d$ which can be defined over a number field of degree at most $B$, except (when $d$ is a perfect square) for the infinite family of flexible Latt\`es maps (see \cite{Benedetto2014}).  In particular, for any number field $K$, there are only finitely many $c\in K$ for which $f_{d,c}$ is post-critically finite. For example, the only quadratic polynomials of the form $z^2 + c$ with $c$ a rational number which are post-critically finite are $z^2, z^2-1$ and $z^2-2$ and every quadratic polynomial is conjugate to a polynomial of this form.

Let $M_K$ be the set of places of $K$ and $S$ be a finite subset of $M_K$ including all the archimedean places of $K$. Let $K_v$ be the completion of $K$ at a place $v\in M_K$, with absolute value $|\cdot|_v$ and $\mathbb{C}_v$ be the completion of an algebraic closure of $K_v$ with absolute value $|\cdot|_v$. Suppose that $\alpha, \beta \in \overline{K}$.  We say that $\beta$ is \emph{$S$-integral relative to} $\alpha$ if no conjugate of $\beta$ meets any conjugate of $\alpha$ at the primes lying outside of $S$. More precisely, for every place $v \notin S$ and every pair of $K$-embeddings $\sigma:\overline{K}\to \mathbb{C}_v$ and $\tau:\overline{K}\to \mathbb{C}_v$, the points $\sigma(x)$ and $\tau(y)$ lie in different residue classes of $\mathbb{P}^1(\mathbb{C}_v)$, that is, 
\begin{equation*}
	\begin{cases}
		|\sigma(x) - \tau(y)|_v \geq 1 \quad &\text{if } |\tau(y)|_v \leq 1, \\
		|\sigma(x)|_v \leq 1 \quad &\text{if } |\tau(y)|_v > 1.
	\end{cases}
\end{equation*}

Benedetto and Ih conjectured a finiteness property for $S$-integral parameters $c\in \overline{K}$ for which $f_{d, c}$ is post-critically finite in the context of dynamical systems (see \cite[Conjecture 1.2]{Benedetto}).
\begin{conjecture} \label{conj1}
    %Let $K$ be a number field with algebraic closure $\overline{K}$, let $S\subseteq M_K$ be a finite set of places of $K$ including all the archimedean places, let $d\geq 2$ be an integer and for any $c\in \overline{K}$, let $f_{d,c}:=z^d+c$. 
Let $\alpha\in \overline{K}$ and suppose that $f_{d, \alpha}$ is not post-critically finite. Then there are finitely many parameters $c\in \overline{K}$ that are $S$-integral relative to $(\alpha)$, for which $f_{d,c}$ is post-critically finite.
\end{conjecture}

For the family $f_{d,c}$, the parameter $c$ lives in a moduli space isomorphic to $\mathbb{A}^1$ and the values of $c$ for which $f_{d,c}$ is post-critically finite may be considered as special points on this variety, analogous to torsion points on an abelian variety or CM points on a modular curve. From this perspective, Conjecture~\ref{conj1} describes the integrality of these (dynamically) special points relative to both special and non-special points on this moduli space (see \cite{baker2008, HabIh}).

In \cite{Benedetto}, the authors proved Conjecture \ref{conj1} under some additional assumptions. We need the following notation to state their result. For each integer $d\geq 2$ and for each place $v\in M_K$, the family $f_{d, c}$ has an associated {\em $v$-adic generalized Mandelbrot set} $\mathcal{M}_{d,v}$ or {\em multibrot set}, defined by
\begin{equation} \label{Mandelbrot}
    \mathcal{M}_{d,v}:= \{c\in \mathbb{C}_v: \mbox{the orbit}\;\; \{f_{d,c}^n(0):n\geq 0\}\;\; \mbox{is bounded}\}.
\end{equation}
Observe that if $c\in \mathbb{C}_v$ is a post-critically finite parameter for $f_{d,c}$, then clearly $c\in \mathcal{M}_{d,v}$. Furthermore, if $v$ is an archimedean place, then $\mathcal{M}_{d,v}$ is the set of parameters $c\in \mathbb{C}$ for which the Julia set of $f_{d,c}$ is connected. In the archimedean case, the Mandelbrot set is compact and the parameters $c$ for which $z = 0$ is strictly preperiodic are called {\em Misiurewicz parameters} and they are dense in the boundary $\partial \mathcal{M}_{d, v}$ of the Mandelbrot set $\mathcal{M}_{d,v}$. The parameters for which $z = 0$ is periodic lie in the interior of $\mathcal{M}_{d,v}$ but also accumulate on $\partial \mathcal{M}_{d,v}$.  Similar phenomena occur in the non-archimedean setting; for example, Rivera-Letelier described analogous Misiurewicz bifurcations in the $p$-adic case (see \cite{Letelier}). The following result is a finiteness property of post-critically finite maps on moduli spaces from the view point of Diophantine geometry (see for example \cite{baker2008}).

\begin{theorem}[\cite{Benedetto}]\label{thm1}
Assume the hypothesis of Conjecture \ref{conj1}. Suppose that for every archimedean place $v$ of $K$ and for every $K$-embeddings $\tau:K(\alpha)\to \mathbb{C}_v$, the image $\tau(\alpha)$ does not lie in the boundary $\partial \mathcal{M}_{d,v}$ of the $v$-adic generalized Mandelbrot set $\mathcal{M}_{d,v}$. Then 
Conjecture \ref{conj1} holds.
\end{theorem}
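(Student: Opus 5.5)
We argue by contradiction, following the equidistribution strategy of Baker--Ineman--Ih--Rumely for integral torsion points. Suppose there are infinitely many distinct post-critically finite parameters $c_n\in\overline{K}$ that are $S$-integral relative to $\alpha$. Enlarging $K$ (and $S$ accordingly) we may assume $\alpha\in K$; integrality relative to $\alpha$ is preserved under finite extension after enlarging $S$. The relevant height is the canonical bifurcation height $\hhat_{\rm bif}(c)$, given at each place $v$ by the escape-rate function $G_v(c)=\lim_n d^{-n}\log^+|f^n_{d,c}(0)|_v$. This height vanishes exactly at post-critically finite parameters, and $G_v\equiv 0$ on $\mathcal{M}_{d,v}$.

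The first step is equidistribution. Since the $c_n$ are distinct and of height zero, and there are only finitely many of bounded degree, their Galois orbits have degree tending to infinity. Arithmetic equidistribution (Baker--Rumely, Favre--Rivera-Letelier, Chambert-Loir) then shows that for every $v\in M_K$ the probability measures $\mu_n$ on the conjugates of $c_n$ converge weakly on the Berkovich line to the bifurcation measure $\mu_v=\Delta G_v$. At archimedean $v$ this is the equilibrium measure of $\mathcal{M}_{d,v}$, supported on $\partial\mathcal{M}_{d,v}$. At nonarchimedean $v$ with $|d|_v=1$ it is the Dirac mass at the Gauss point; in general it is supported on the Berkovich boundary of $\mathcal{M}_{d,v}$.

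The second step evaluates the sum $\Sigma_n:=\frac{1}{[K(c_n):K]}\sum_{v\in M_K}\sum_{\sigma}\log|\sigma(c_n)-\alpha|_v$, suitably weighted with local degrees. By the product formula $\Sigma_n=0$ for every $n$, since $c_n\neq\alpha$ (because $\alpha$ is not post-critically finite). For $v\notin S$, integrality forces each term to be $\ge 0$, and in fact $=0$ because $c_n$ is an algebraic integer. Indeed, if $|\alpha|_v\le1$ then $|\sigma(c_n)-\alpha|_v\ge1$ together with $|c_n|_v\le1$ gives equality. If $|\alpha|_v>1$, then $|\sigma(c_n)-\alpha|_v=|\alpha|_v$; this contributes the constant $\log|\alpha|_v$, which is absorbed into $\log^+|\alpha|_v$ and matches the limit integral. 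Hence only the finitely many places in $S$ matter. One then shows that
\[
\lim_{n\to\infty}\Sigma_n=\sum_{v}\int\log|z-\alpha|_v\,d\mu_v(z)=\sum_v G_v(\alpha)=\hhat_{\rm bif}(\alpha)>0.
\]
Here the middle equality uses that the Green function of $\mathcal{M}_{d,v}$ equals the potential of $\mu_v$: the capacity of $\mathcal{M}_{d,v}$ is $1$ and $\mu_v$ is its equilibrium measure, so $\int\log|z-\alpha|_v\,d\mu_v=G_v(\alpha)$. Positivity holds because $\alpha$ is not PCF, so $\hhat_{\rm bif}(\alpha)>0$. This contradicts $\Sigma_n=0$.

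The main obstacle is passing to the limit in $\Sigma_n$, since $\log|z-\alpha|_v$ is not continuous at $z=\alpha$. For archimedean $v$ the hypothesis is exactly what is needed. If $\tau(\alpha)\notin\partial\mathcal{M}_{d,v}$, it lies at positive distance $r$ from $\operatorname{supp}\mu_v$. However, conjugates of $c_n$ could still approach $\tau(\alpha)$, for instance when $\alpha$ lies in a hyperbolic component whose centers accumulate. We handle this by truncating: write $\log|z-\alpha|=\max(\log|z-\alpha|,\log\epsilon)+(\text{error})$. The error only lowers $\Sigma_n$, which yields $\liminf\Sigma_n\ge \hhat_{\rm bif}(\alpha)$ up to the truncation error, and that suffices for the inequality we need ($0\ge$ positive). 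Upper semicontinuity is on the right side: the truncated function is continuous and agrees with $\log|z-\alpha|$ near the support of $\mu_v$ when $\epsilon<r$, so the truncated sums converge to the exact integral. Since the discrepancy is nonpositive, we get $0=\Sigma_n\le(\text{truncated sum})\to \hhat_{\rm bif}(\alpha)$ only in the wrong direction. The sign therefore has to be handled with care.

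What actually must be controlled is that few conjugates come close to $\alpha$ at archimedean places. At nonarchimedean places in $S$, $\log\max(|z-\alpha|_v,\epsilon)$ is continuous on Berkovich space and the Gauss point or boundary support avoids type I points, and the same issue arises there. The fix is to use integrality together with the product formula on $S$ in the opposite direction. Summing the truncated functions gives a quantity $\ge \Sigma_n=0$, which converges to $\hhat_{\rm bif}(\alpha)$ plus a truncation correction that tends to $0$ as $\epsilon\to0$, because $\mu_v$ puts no mass near $\alpha$. The remaining task, which I expect to be the delicate part, is to show that the truncation error $\frac{1}{\deg}\sum_{|\sigma(c_n)-\alpha|<\epsilon}\log(\epsilon/|\sigma(c_n)-\alpha|)$ tends to $0$. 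I would attempt this via a Liouville-type lower bound $|\sigma(c_n)-\alpha|\ge e^{-O(\deg c_n)}$ combined with weak convergence, which gives vanishing mass near $\alpha$. This is where the non-boundary hypothesis is essential.
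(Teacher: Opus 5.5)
Your framework is the right one: equidistribution at every place, the product formula, the identity $\int\log|z-\alpha|_v\,d\mu_{d,v}=G_{d,v}(\alpha)$, and positivity of $\hhat_{\rm bif}(\alpha)$. You also correctly see that truncation alone only yields $0\le\hhat_{\rm bif}(\alpha)$. However, the step that carries all the content is left open, namely that the truncation error
$E_n=\frac{1}{\deg c_n}\sum_{|\sigma(c_n)-\alpha|_v<\epsilon}\log(\epsilon/|\sigma(c_n)-\alpha|_v)$ tends to $0$ at each $v\in S$. The fix you propose cannot close it.
\begin{itemize}
\item Liouville gives $\log|\sigma(c_n)-\alpha|_v^{-1}=O(\deg c_n)$ per conjugate.
\item Weak convergence only says that the number of conjugates within $\epsilon$ of $\alpha$ is $o(\deg c_n)$.
\item Together these bound $E_n$ by a constant times the number of close conjugates, which need not be bounded, let alone $o(1)$.
\end{itemize}
A single conjugate at distance $e^{-c\deg c_n}$ already contributes $c$ to $E_n$, which is enough to destroy the contradiction. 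Note that the hypothesis $\tau(\alpha)\notin\partial\mathcal{M}_{d,v}$ plays no real role in your argument, since $\mu_{d,v}$ has no atoms regardless. That is a sign the mechanism is wrong.

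The missing idea is that for $n$ large there are \emph{no} close conjugates at all, so $E_n=0$ exactly.

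\emph{Archimedean places.} At archimedean $v$, PCF parameters accumulate only on $\partial\mathcal{M}_{d,v}$. There are none outside $\mathcal{M}_{d,v}$, a hyperbolic component contains exactly one (its center), a non-hyperbolic interior component contains none, and Misiurewicz parameters lie on the boundary. So your worry about centers accumulating inside a hyperbolic component is unfounded. Since $\alpha$ is not PCF, one finds $r>0$ such that $D(\tau(\alpha),r)$ contains no PCF parameter whatsoever; this is the first step of the proof of Proposition~\ref{prop3.1}.

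\emph{Non-archimedean places.} At the non-archimedean places of $S$, where the theorem assumes nothing, no soft argument suffices. You need the $p$-adic finiteness theorem of Benedetto--Ih invoked in the proof of Proposition~\ref{prop3.2}: for each $0<r<1$, only finitely many PCF parameters lie in $\overline{D}(\alpha,r)$. These lie in finitely many Galois orbits, so for $n\gg0$ every conjugate satisfies $|\sigma(c_n)-\alpha|_v>r$. The function $\log\max\{|z-\alpha|_v,r\}$ is continuous, with value $\log^+|\alpha|_v=G_{d,v}(\alpha)$ at the Gauss point. (Incidentally, for $z^d+c$ one has $\mathcal{M}_{d,v}=\overline{D}(0,1)$ and $\mu_{d,v}=\delta_{\zeta(0,1)}$ at every non-archimedean $v$, independently of $|d|_v$.)

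With both inputs, $\Sigma_n$ coincides with the truncated sums for large $n$. These converge to $\hhat_{\rm bif}(\alpha)>0$, contradicting $\Sigma_n=0$.
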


 Recently, we have seen the effectiveness of quantitative equidistribution techniques in answering questions in unlikely intersections, particularly with a view toward a uniform result (see for instance, \cite{demarco2020, demarco2022, Prout, yap}). In this paper, we provide quantitative bounds on the size of Galois orbits of post-critically finite parameters. For an algebraic number $x\in \overline{\Q}$ and a number field $K$, we let $\mathcal{G}(x):=\mathrm{Gal}(\overline{K}/K)\cdot x$ denote the $\mathrm{Gal}(\overline{K}/K)$-orbit of $x$ and $|\mathcal{G}(x)|$ denote the size of the set $\mathcal{G}(x)$.  
\begin{theorem}\label{thm2}
Let $K$ be a number field with its algebraic closure $\overline{K}$ and $S$ be a finite set of places of $K$ including all the archimedean places. Let $f_{d,c}(z):=z^d+c$ be the unicritical family of degree $d\ge 2$ defined over $\overline{K}$, suppose that 
\begin{enumerate}[label={\upshape(\alph*)}]
\item  $f_{d,\alpha}$ is not post-critically finite, 
\item  for every archimedean place  $v$ of  $K$ and every $K$-embedding
$\tau : K(\alpha) \hookrightarrow \mathbb{C}_v$, the image $\tau(\alpha)$ doesn't lie in the boundary $\partial \mathcal{M}_{d,v}$, i.e., $\tau(\alpha) \notin \partial \mathcal{M}_{d,v}$.
\end{enumerate}
 Then for any number field $L/K$, there exists a constant $C_0 = C_0(L, |S|, d)$ such that for any $\alpha \in L$ satisfying $(a)$ and $(b)$, if $c \in \mathbb{Q}$ is a post-critically finite parameter for $f_{d,c}$ which is $S$-integral relative to $\alpha$, then $|\mathcal{G}(c)| < C_0$.
\end{theorem}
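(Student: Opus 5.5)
The plan is to make the Benedetto--Ih argument behind Theorem~\ref{thm1} quantitative. I read the hypothesis ``$c\in\mathbb{Q}$'' as $c\in\overline{K}$; the argument does not use rationality of $c$. Write $c_1,\dots,c_N$ for the Galois conjugates of $c$, so $N=|\mathcal{G}(c)|$, and $\alpha_1,\dots,\alpha_m$ for the conjugates of $\alpha$, with $m\le[L:\mathbb{Q}]$. Let $\hhat$ denote the canonical height of the critical point, $\hhat(x)=\sum_v \frac{[K_v:\mathbb{Q}_v]}{[K:\mathbb{Q}]}\,G_v(x)$, where $G_v(x)=\lim d^{-n}\log^+|f^n_{d,x}(0)|_v$ is the $v$-adic Green function of $\mathcal{M}_{d,v}$.

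The key facts I will use are these. First, $G_v(x)=\int\log|x-t|_v\,d\mu_v(t)$, where $\mu_v$ is the bifurcation measure, which is the equilibrium measure of $\mathcal{M}_{d,v}$ with capacity $1$. Second, $\hhat(\alpha)>0$ by hypothesis (a). Third, $\hhat(c_i)=0$ for every $i$.

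Apply the product formula to the algebraic number $\prod_{i,j}(c_i-\alpha_j)$, which is nonzero since $\alpha$ is not PCF:
\[
0=\sum_{v\in M_K}\frac{[K_v:\mathbb{Q}_v]}{[K:\mathbb{Q}]}\;\frac1{Nm}\sum_{i,j}\log|c_i-\alpha_j|_v .
\]
\begin{itemize}
\item For $v\notin S$, $S$-integrality of $c$ relative to $\alpha$, together with $|c_i|_v\le1$ (the $c_i$ are algebraic integers), forces $\log|c_i-\alpha_j|_v=\log^+|\alpha_j|_v=G_v(\alpha_j)$. So each such place contributes exactly its share of $\hhat(\alpha)$.
\item For $v\in S$, I will compare the average $\frac1N\sum_i\log|c_i-\alpha_j|_v$ with $\int\log|t-\alpha_j|_v\,d\mu_v=G_v(\alpha_j)$.
\end{itemize}
If the comparison has error $E_v(N)$, the identity becomes $\hhat(\alpha)\le\sum_{v\in S}E_v(N)$. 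This bounds $N$ as soon as $E_v(N)\to0$ at an explicit rate and $\hhat(\alpha)$ is bounded below uniformly.

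To bound $E_v(N)$ I will use a quantitative equidistribution theorem for PCF parameters, in the style of Favre--Rivera-Letelier and Fili. For a test function $\phi$ with Lipschitz (or $C^2$) norm $\mathrm{Lip}(\phi)$ it should give
\[
\Bigl|\frac1N\sum_i\phi(c_i)-\int\phi\,d\mu_v\Bigr|\ll \mathrm{Lip}(\phi)\Bigl(\hhat\text{-term}+\frac{\log N}{N}\Bigr)^{1/2},
\]
where the height term vanishes because $\hhat(c_i)=0$. The function $\log|t-\alpha_j|_v$ is singular at $\alpha_j$, so I will replace it by a version $\phi_\delta$ truncated at radius $\delta$.

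\begin{itemize}
\item The error on the $\mu_v$ side is controlled by hypothesis (b). If $\alpha_j\notin\mathcal{M}_{d,v}$, or $\alpha_j$ lies in the interior, then $\mathrm{dist}(\alpha_j,\operatorname{supp}\mu_v)\ge r_v>0$, since $\operatorname{supp}\mu_v=\partial\mathcal{M}_{d,v}$. Taking $\delta<r_v$ makes this error vanish. At non-archimedean $v\in S$, $\mu_v$ is supported on the Gauss point or a Cantor set, and the analogous statement is easier.
\item The error on the discrete side comes only from those $c_i$ with $|c_i-\alpha_j|_v<\delta$. These can exist, for instance centers of hyperbolic components accumulating at an interior $\alpha_j$. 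I will bound them below using only the inequality $\log|c_i-\alpha_j|_v\le\log^+|c_i|_v+\log^+|\alpha_j|_v+\log 2$, which holds at every place. Combined with the product formula applied to the remaining places, this should bound the total contribution of close conjugates by roughly $O\bigl(\delta\cdot\text{(count of close }c_i)/N+\hhat(\alpha)\text{-dependent terms}/N\bigr)$.
\end{itemize}
Balancing $\delta$ against $\mathrm{Lip}(\phi_\delta)\approx\delta^{-1}$ gives $E_v(N)\ll \delta^{-1}(\log N/N)^{1/2}+o(1)$. This yields $N\le C(\alpha,d,|S|,L)$.

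The main obstacle is uniformity in $\alpha$. The argument above gives a constant depending on $r_v(\alpha)$ and on $\hhat(\alpha)$, whereas $C_0$ may depend only on $L,|S|,d$.

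\begin{itemize}
\item \emph{Large height.} If $\hhat(\alpha)$ is large, I expect the argument to go through directly. Then $|\alpha_j|_v$ is large at some place. At archimedean places the far-away $\alpha_j$ are far from $\mathcal{M}_{d,v}$, so $r_v$ is large. Moreover, in the displayed inequality the left side grows linearly in $\hhat(\alpha)$ while the error terms do not.
\item \emph{Bounded height.} For $\hhat(\alpha)$ below a threshold, Northcott's theorem leaves finitely many $\alpha\in L$ to consider. I then take the maximum of the constants over this finite set. Because the threshold depends only on $d$ and $L$, this maximum still depends only on $L,|S|,d$.
\end{itemize}

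The delicate point I will check first is that the threshold in the first case, and the lower bound on $r_v$ there, depend only on $d$ and $[L:\mathbb{Q}]$. I expect to obtain this from the standard estimate $|G_v(x)-\log^+|x|_v|\le\frac{\log 2}{d-1}$ at archimedean $v$, and the corresponding equality $G_v(x)=\log^+|x|_v$ at non-archimedean $v$, whenever $|x|_v$ is large.
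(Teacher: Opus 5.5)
Your skeleton is essentially the paper's: the product formula for $\prod_{i,j}(c_i-\alpha_j)$ (the paper phrases it as an Arakelov--Zhang pairing), exact matching at $v\notin S$, and quantitative equidistribution with a truncated logarithm at $v\in S$. Your Northcott reduction for bounded $\hat h(\alpha)$ is a legitimate substitute for the paper's use of Dobrowolski.

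\textbf{The gap: the large-height case.} All of the uniformity in $\alpha$ has to come from this case, and your claim there is false. A large $\hat h(\alpha)$ can be carried entirely by finite places, so it does not keep the archimedean conjugates away from $\partial\mathcal{M}_{d,v}$ or from PCF parameters. Take $d=2$, $L=K=\mathbb{Q}$, $S=\{\infty\}$ and $\alpha_n=\frac14+\frac1n$.
\begin{itemize}
\item The $\alpha_n$ are real and $>\frac14$, hence outside $\mathcal{M}_{2,\infty}$, so they satisfy (a) and (b).
\item They have $h(\alpha_n)\ge\log(n/4)$.
\item Yet each lies within $1/n$ of $\frac14\in\partial\mathcal{M}_{2,\infty}$.
\end{itemize}
Likewise, $\alpha$ can sit inside a tiny hyperbolic component very near its center. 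So $r_v(\alpha)$ has no lower bound depending only on $d$ and $[L:\mathbb{Q}]$. For $\alpha_n$ your error term $\delta^{-1}(\log N/N)^{1/2}$ with $\delta<r_v(\alpha_n)$ is at least $n(\log N/N)^{1/2}$. This is below $\hat h(\alpha_n)\approx\log n$ only for $N$ growing polynomially in $n$. Your sentence ``the error terms do not [grow]'' is exactly what fails.

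\textbf{Keeping $\delta$ fixed does not rescue it.} Suppose you fix $\delta$ and treat the conjugates with $|c_i-\alpha_j|_v<\delta$ separately. For those you need
\[
\log|c_i-\alpha_j|_v^{-1}=o(N)\,(h(\alpha)+1),
\]
with constants depending only on $d$ and $[L:\mathbb{Q}]$. Near an interior $\alpha_j$ the issue is not many close conjugates: a small disk inside $\operatorname{int}\mathcal{M}_{d,v}$ contains at most one PCF parameter, the center of its component. The issue is how close that one can be.

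Your proposed tool bounds $\log|c_i-\alpha_j|_w\le\log^+|c_i|_w+\log^+|\alpha_j|_w+\log 2$ and sums it over the other places via the product formula for $c_i-\alpha_j$. That is just Liouville's inequality. Since $\mathbb{Q}(c_i,\alpha_j)$ has degree of order $N$, it gives only
\[
\log|c_i-\alpha_j|_v^{-1}\le[\mathbb{Q}(c_i,\alpha_j):\mathbb{Q}]\,(h(c)+h(\alpha)+\log 2)\ll N[L:\mathbb{Q}](h(\alpha)+1).
\]
So a single close conjugate may contribute as much as $[L:\mathbb{Q}](h(\alpha)+O(1))$ to the average $\frac1N\sum_i\log|c_i-\alpha_j|_v^{-1}$. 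That is the same order as $\hat h(\alpha)$, so the inequality $\hat h(\alpha)\le\sum_{v\in S}E_v(N)$ yields no contradiction. Applying the product formula to the full product instead is circular, since that is your starting identity.

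\textbf{How the paper handles this step.} The paper isolates exactly this step. Proposition~\ref{archprop} and Corollary~\ref{cor3.3} leave at most one very close conjugate, and Proposition~\ref{prop3.1} is meant to bound $\log|x-\alpha|_v^{-1}$ for it via linear forms in logarithms. However, the bound stated there, $C_6\bigl(h(\alpha)+\frac{d}{d-1}\bigr)|\mathcal{G}|^{8+\epsilon}$, is not $o(|\mathcal{G}|)$ either. Importing it would not close your argument; some genuinely stronger Diophantine input is needed.

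\textbf{Other points.}
\begin{itemize}
\item The same close-conjugate problem occurs at finite $v\in S$, which is not ``easier.'' There $\mu_v$ is always the Gauss point; there is no Cantor-set case for $z^d+c$. Already for fixed $\alpha$ you need the Benedetto--Ih discreteness theorem (used in Proposition~\ref{prop3.2}) to keep conjugates out of $\overline{D}(\alpha_j,r)$.
\item Minor: with $f^n_{d,x}(0)$ your $G_v$ is $\frac1d$ times the Green function. Use $f^n_{d,x}(x)$, so that $G_v=\log^+|\cdot|_v$ at finite places and $G_v(x)=\int\log|x-t|_v\,d\mu_v(t)$.
\end{itemize}
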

In the next result, we will provide an upper bound that grows exponentially.  
\begin{theorem}\label{thm02} 
	Let $K$ be a number field with its algebraic closure $\overline{K}$, $S$ be a finite set of places of $K$ including all the archimedean places. Let $f_{d,c}(z):=z^d+c$ be the unicritical family of degree $d\ge 2$ defined over $\overline{K}$. Then for any number field $L/K$, there exists a constant $C_1=C_1(d) >0$, independent of $L$ and $S$ such that for all $\alpha \in L$ satisfying $(a)$ and $(b)$ given in Theorem \ref{thm2}, the set 
	$$\{c \in \overline{\mathbb{Q}}: |\mathcal{G}(c)|  >C_1 |S|^3 [L:\Q]^{8}, c \text{ is $S$-integral relative to $\alpha$}\}$$ is a union of at most $|S| ~\mathrm{Gal}(\overline{K}/K)$-orbits.
\end{theorem}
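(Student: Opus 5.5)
We argue via quantitative equidistribution of small points with respect to the bifurcation measure, in the spirit of the uniform results cited above. Let $\hhat(c):=\lim_n d^{-n}h(f^n_{d,c}(0))$ be the critical height on the parameter line. For each place $v$ let $G_v(c)=\lim_n d^{-n}\log^+|f^n_{d,c}(0)|_v$ be the $v$-adic Green function of $\mathcal{M}_{d,v}$, and let $\mu_v=\Delta G_v$ be the $v$-adic bifurcation measure. Post-critically finite parameters are exactly the parameters with $\hhat(c)=0$. The structure of the argument is as follows: an $S$-integral PCF Galois orbit that is large forces a lower bound on an energy pairing, while quantitative equidistribution forces an upper bound on that same pairing. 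Comparing the two bounds gives the degree bound $|\mathcal{G}(c)|\le C_1|S|^3[L:\Q]^8$, except for a bounded number of exceptional orbits.

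\textbf{Step 1 (integrality as a height identity).} Fix $\alpha\in L$ and let $F=\mathcal{G}(c)$ with $N=|F|$. For $v\notin S$, $S$-integrality says that every conjugate of $c$ lies in a residue class different from every conjugate of $\alpha$. Since $c$ is an algebraic integer and $\mathcal{M}_{d,v}$ is the closed unit disk for non-archimedean $v$, this gives $\log|\sigma(c)-\tau(\alpha)|_v=\log^+|\tau(\alpha)|_v$ for $v\notin S$. Summing the product formula applied to $\prod_{\sigma,\tau}(\sigma(c)-\tau(\alpha))$ therefore yields
\[
\sum_{v\in S}\frac{n_v}{[K:\Q]}\,\frac{1}{N[L:K]}\sum_{\sigma,\tau}\bigl(\log|\sigma(c)-\tau(\alpha)|_v - G_v(\tau(\alpha))\bigr)= -\sum_{v\notin S}(\text{nonneg.})\le 0 .
\]
Here we use that $G_v=\log^+|\cdot|_v$ at good non-archimedean places; at the finitely many places of bad behavior (only those dividing $d$) the discrepancy is $O(\log d)$ and is absorbed into $C_1$. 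On the other side, $\int\log|z-\tau(\alpha)|_v\,d\mu_v(z)=G_v(\tau(\alpha))$, because the capacity of $\mathcal{M}_{d,v}$ is $1$. Hence the left side is a sum over $v\in S$ of the discrepancies between the discrete measure $[F]_v=N^{-1}\sum_\sigma\delta_{\sigma(c)}$ and $\mu_v$, each tested against the function $\phi_{v,\tau}(z)=\log|z-\tau(\alpha)|_v$.

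\textbf{Step 2 (quantitative equidistribution).} Since $\hhat(c)=0$, the Favre--Rivera-Letelier style estimate gives, for every test function $\phi$ that is Lipschitz (respectively $C^2$ archimedean),
\[
\Bigl|\int\phi\,d([F]_v-\mu_v)\Bigr|\le C(d)\Bigl(\mathrm{Lip}(\phi)\,\epsilon+\frac{\log N}{N}\,\epsilon^{-1}\Bigr)^{1/2}\langle\phi,\phi\rangle^{1/2},
\]
after regularizing $[F]_v$ at scale $\epsilon$. The logarithm $\phi_{v,\tau}$ is singular at $\tau(\alpha)$, so we replace it by $\log\max(|z-\tau(\alpha)|_v,r)$. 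The error incurred by this truncation splits according to $v$. At an archimedean $v$, hypothesis (b) guarantees that $\tau(\alpha)$ lies either in the interior of $\mathcal{M}_{d,v}$ or outside it. Assuming the paper's uniformity in $\alpha$, we need a quantitative distance $\delta\gg[L:\Q]^{-A}$ from $\tau(\alpha)$ to $\partial\mathcal{M}_{d,v}=\mathrm{supp}\,\mu_v$. If $\tau(\alpha)$ is exterior, $G_v>0$ provides this separation; if it is interior, $\mu_v$ vanishes nearby. At a non-archimedean $v\in S$, only points $\sigma(c)$ in the residue class of $\tau(\alpha)$ contribute to the error, and these are counted using the fact that the $\sigma(c)$ are roots of the Gleason polynomials. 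Choosing $\epsilon$ and $r$ as powers of $N$ and $[L:\Q]$ makes each place contribute $O\bigl([L:\Q]^{a}(\log N/N)^{1/2}\bigr)$.

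\textbf{Step 3 (counting orbits).} The sign forced in Step 1 would make the total of these discrepancies at least some fixed amount. I expect the main obstacle to be exactly this point: Step 1 only gives $\le0$, and an absolute lower bound must come from a dichotomy. For each $\tau$ and each $v\in S$, either a positive proportion of the conjugates of $c$ approach $\tau(\alpha)$ $v$-adically, or the pairing is bounded below by a constant $\gg_d 1/(|S|[L:\Q]^2)$, using that $f_{d,\alpha}$ is not PCF so that $\hhat(\alpha)>0$. The first alternative can occur for only one Galois orbit per place, because two distinct PCF orbits both accumulating at $\tau(\alpha)$ would contradict the disjointness of their $\mu_v$-approximations. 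This accounts for the "at most $|S|$ orbits" exception in the statement. For all other orbits, combining the lower bound with Step 2 gives $(\log N/N)^{1/2}\gg |S|^{-3/2}[L:\Q]^{-4}$, and solving for $N$ yields $N\le C_1|S|^3[L:\Q]^8$.
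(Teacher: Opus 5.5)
Your architecture matches the paper's: product formula plus integrality, quantitative equidistribution at the places of $S$ after truncating the logarithmic singularity, at most one exceptional orbit per place, and positivity from hypothesis (a). But the two inputs that carry the argument are missing.

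\textbf{The identity in Step 1 and the missing lower bound.} Step 1 is miscomputed, and this is what sends you looking for a dichotomy. For $v\notin S$, integrality and $|\sigma(c)|_v\le 1$ give $\log|\sigma(c)-\tau(\alpha)|_v=\log^+|\tau(\alpha)|_v=G_v(\tau(\alpha))$ exactly, so those places cancel. Subtracting $G_v(\tau(\alpha))$ at the places of $S$ moves those terms to the right. The correct identity is
\[
\sum_{v\in S}\frac{n_v}{[K:\mathbb{Q}]}\,\frac{1}{N[L:K]}\sum_{\sigma,\tau}\Bigl(\log|\sigma(c)-\tau(\alpha)|_v-G_v(\tau(\alpha))\Bigr)=-\hat{h}_{\mathrm{crit}}(\alpha),
\]
where $\hat{h}_{\mathrm{crit}}(\alpha)=\frac{1}{[L:K]}\sum_{\tau}\sum_{v\in M_K}\frac{n_v}{[K:\mathbb{Q}]}G_v(\tau(\alpha))$. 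This is $>0$ precisely by (a), and it is the term $h_{d,c}(\alpha)$ in \eqref{al4.10}.

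There are no bad places dividing $d$: for $z^d+c$, $G_v=\log^+|\cdot|_v$ at every non-archimedean $v$. An additive $O(\log d)$ error could not be absorbed into $C_1$ anyway, since the argument compares a fixed positive quantity with errors tending to $0$.

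So the sign is automatic. What is missing is a \emph{uniform} lower bound for $\hat{h}_{\mathrm{crit}}(\alpha)$ over non-PCF $\alpha\in L$. You assert $\gg_d 1/(|S|[L:\mathbb{Q}]^2)$ ``using $\hat h(\alpha)>0$'', but positivity gives no rate. Such a bound is immediate when $\alpha$ is non-integral at a finite place, giving $\hat{h}_{\mathrm{crit}}(\alpha)\ge\log 2/[L:\mathbb{Q}]$. For algebraic integers $\alpha$ it is a Lehmer-type problem for the critical height and needs an argument; the paper's corresponding step uses $|\hat{h}_{\mathrm{crit}}-h|\le C_{11}$ together with Dobrowolski.

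Since $\hat{h}_{\mathrm{crit}}(\alpha)$ does not involve $S$, the factor $|S|$ in your bound and the final exponents are fitted to the statement rather than derived. Per-place errors $[L:\mathbb{Q}]^a(\log N/N)^{1/2}$ summed over $S$ against your lower bound give only $N/\log N\ll |S|^4[L:\mathbb{Q}]^{4+2a}$.

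\textbf{The truncation and the exceptional orbits.} Neither is justified.
\begin{itemize}
\item Hypothesis (b) is qualitative, so the separation $\delta\gg[L:\mathbb{Q}]^{-A}$ is not available. For $d=2$ and large $n$, $\alpha=\tfrac14\pm\tfrac1n\in\mathbb{Q}$ satisfies (a) and (b) but lies within $1/n$ of $\partial\mathcal{M}_{2,v}$ at the archimedean place. As written, Step 2 assumes exactly the uniformity in $\alpha$ that has to be proved.
\item The $\mu_v$-side of the truncation has the favourable sign (and is small by H\"older continuity of $G_v$). What must be controlled is the discrete term $\frac1N\sum_{|\sigma(c)-\tau(\alpha)|_v<r}\log\frac{r}{|\sigma(c)-\tau(\alpha)|_v}$, at archimedean and non-archimedean places of $S$ alike. ``Counting roots of Gleason polynomials'' does not bound it.
\item Your dichotomy misidentifies the danger. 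A positive proportion of a large orbit near $\tau(\alpha)$ is already excluded by equidistribution, since $\mu_v$ gives small mass to small disks about $\tau(\alpha)$. An orbit is dangerous when a single conjugate is extremely close to $\tau(\alpha)$. ``Disjointness of $\mu_v$-approximations'' does not stop two orbits from doing that.
\end{itemize}

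The paper gets one exceptional orbit per place from a separation statement: at most one PCF parameter lies in a sufficiently small disk about $\alpha$ (root separation in Proposition \ref{archprop}, finiteness in $\overline{D}(\alpha,r)$ in Corollary \ref{cor3.3}). It combines this with upper bounds on $\log|x-\alpha|_v^{-1}$ for the remaining orbits (Propositions \ref{prop3.1}, \ref{prop3.2}). You need inputs of this kind, strong enough that the discrete term is of the order of the equidistribution error after dividing by $N$.

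Finally, a dichotomy indexed by pairs $(\tau,v)$ yields up to $[L:K]\,|S|$ exceptional orbits, not $|S|$.
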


The plan of our paper is as follows. In Section \ref{sec-prelim}, we set up our notation and give a brief overview of Berkovich spaces, global Arakelov--Zhang pairing and quantitative equidistribution theorem. Finally, in Section \ref{sec-proof}, we provide a proof of our main theorem along with some auxiliary results. We note that the main ideas of our work are coherent with \cite{Prout, yap}.

\section{Preliminaries}\label{sec-prelim}
Let $K\subset \mathbb{C}$ be a number field which is Galois over $\mathbb{Q}$ with the Galois group $\mathrm{Gal}(K/\mathbb{Q})$. Let $M_K$ be the set of places of $K$. For each place $w\in M_K$, let $K_w$ denote the completion of the number field $K$ with respect to $w$ and $v$ is the restriction  of $w$ to $\mathbb{Q}$. For every $x\in K$, we define the normalized absolute value $|\cdot |_w$ as follows,
$$|x|_w:=|\mbox{Norm}_{K_w/\mathbb{Q}_v}(x)|_v^{{1/[K:\mathbb{Q}]}}.$$
In this notation, we have the product formula
\[\displaystyle\prod_{\omega\in M_K}|x|_\omega=1,\] for all $x\in K^\times$.
\subsection{Canonical heights} 
Here, we define the global canonical
height function associated to a polynomial $f\in K[z]$ of degree $d \geq 2$ and summarize some of its main properties.

If $v \in M_K$, we define the {\it standard local height function} on
$\C_v$ to be the function $\lambda_v : \C_v \to \R$ given by 
\[
\lambda_v(z) = \log^+ | z |_v,
\]
where for $x \in \R$ and $\log^+ x = \log \max \{ x,1 \}$.
If $n_v=[K_v:\mathbb{\Q}_v]$ is the local degree of $K$, then the usual absolute height of $\alpha\in K$ is given by
\begin{alignat*}{1}
h(\alpha)&=\frac{1}{[K:\mathbb{\Q}]}\sum_{v\in M_K}n_v\lambda_v(\alpha).
\end{alignat*} 
If $U$ is any finite, Galois-stable subset of $\overline{K}$, we define the
{\it absolute logarithmic height} of $U$ to be the average of the (properly normalized) local heights of all elements in $U$.  More precisely, we define
\[
h(U) := \frac{1}{|U|} \sum_{z \in U} \frac{1}{[K:\mathbb{\Q}]}\sum_{v \in M_K} n_v\lambda_v(z).
\]
If $U = \{ \alpha \}$ consists of a single algebraic number,
the height $h(\alpha)$ coincides with the usual
definition of absolute logarithmic height.

We now define the (global) canonical height associated to 
$f \in K[z]$ by the formula
\[
\hhat_f (z) := \lim_{n\to\infty} \frac{1}{d^n} h(f^n(z)),
\]
where $f^n$ denotes the $n$th iterate of $f$.  It follows from \cite{CS} that the limit exists and in particular that
$\hhat_f$ is well-defined.  
As a function from $\overline{K}$ to $\R$, we can characterize $h_f$ as the unique function such that 
\begin{itemize}
\item[(1)] There exists a constant $M$ (depending only on $f$) such that
  for all $\alpha \in \overline{K}$, we have $|\hhat_f(\alpha) - h(\alpha)| \leq M$.
\item[(2)] For all $\alpha \in \overline{K}$, we have $\hhat_f(f(\alpha)) = dh_f(\alpha)$.
\end{itemize}
Since $K$ satisfies the Northcott finiteness property, these two properties imply:
\begin{itemize}
\item[(3)] If $\alpha \in \overline{K}$, then $\hhat_f(\alpha) \geq 0$ and 
$\hhat_f(\alpha) = 0$ if and only if $\alpha$ is a {\it preperiodic point}
for $f$.  
\end{itemize}
There is a decomposition of the global canonical height attached to $f$
into a sum of canonical local  heights. For any place $v\in M_K$ and for $f(z)\in\mathbb{C}_v[z]$ a polynomial of degree $d\geq 2$,
the associated (\emph{Call-Silverman}) \emph{canonical local height function} $\lambda_{f,v} : \C_v \to \R$ is given by
\[
\lambda_{f,v}(z) := \lim_{n \to \infty} \frac{1}{d^n} \lambda_v(f^n(z)).
\]
Call--Silverman heights were introduced in \cite{CS} and \cite{sil}. It is shown in \cite{CS} that this limit exists and that we have
\[
\hhat_f (\alpha) =\frac{1}{[K:\mathbb{Q}]}\sum_{v\in M_{K}}n_v\lambda_{f, v}(\alpha),
\]
where $n_v=[K_v:\mathbb{Q}_v]$. Note that the function $\lambda_{f,v}$ takes non-negative values and it is strictly positive exactly at points $z\in\C_v$ for which $f^n(z)\to\infty$ as $n\to\infty$.
That is, $\lambda_{f,v}$ is zero precisely on the \emph{filled Julia set} of $f$ at $v$. Moreover, $\lambda_{f,v}$ differs from the
standard local height function $\lambda_v(z)$
by a bounded amount and the two coincide for all but finitely many $v$.

If $f(z) = a_0 + a_1z + \cdots + a_d z^d  \in K[z]$ with $a_d \neq
0$ and if $v$ is non-archimedean, we let $\alpha_v$ be the
minimum of the $d+1$ numbers $\frac{1}{d-i}v(\frac{a_i}{a_d}), \;
0\leq i < d$ and $\frac{1}{d-1} v(\frac{1}{a_d})$.  (We consider $v(0)$ to be $+\infty$). We define 
\[
c_v(f) := |a_d|_v^{\frac{-1}{d-1}}.
\]
The following are some of the properties of the canonical local
heights attached to $f$ (see \cite{CS} for proofs):

\begin{enumerate}
\item For each $v$ and each $z \in \C_v$, $\lambda_{f,v}(f(z)) =
d \lambda_{f,v}(z)$.
\item For each $v$, $\lambda_{f,v} : \C_v \to \R$ is a continuous
non-negative function which is zero precisely on the {\it $v$-adic filled Julia set} $\mathcal{K}_v := \{ z \in \C_v \; : \; |f^n(z)|_v \not\to \infty
\}$ of $f$.
\item For each $v$, the difference $|\lambda_{f,v} - \lambda_v|$ is a bounded
function on $\C_v$.
\item For all but finitely many $v \in M_K$, we have
$\lambda_{f,v} = \lambda_v$.
\item If $v$ is archimedean, then
$\lambda_{f,v}(z) = \log |z|_v - \log c_v(f) + o(1)$
as $|z|_v \to \infty$.
\item If $v$ is non-archimedean, then for
$|z|_v$ sufficiently large (depending only on $f$), we have
$\lambda_{f,v}(z) = \log |z|_v - \log c_v(f).$
Specifically, this formula is valid whenever $v(z) < \alpha_v$.
\end{enumerate}
We need the following result to prove our theorem. Here we mention a part of Lemma 3.4 in \cite{Habegger}.

\begin{lemma} [\cite{Habegger}]\label{absolut} 
    Let $K$ be a number field, $f(z):=z^d+c$ be the unicritical family of degree $d\ge 2$ defined over $\overline{K}$. Suppose $\{ f^{n}(0) : n \in \mathbb{N} \}$
is a bounded set. Let $v$ is an archimedean place, we abbreviate $\lambda_{f,v}$ by $\lambda_f$. Then $|c| \le 2^{1/(d-1)}$.
\end{lemma}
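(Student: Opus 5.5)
The plan is to show directly that if $|c|>2^{1/(d-1)}$ then the critical orbit $\{f^n(0)\}$ escapes to infinity, contradicting boundedness. Throughout, $|\cdot|$ denotes the usual archimedean absolute value on $\mathbb{C}$; the normalization by $1/[K:\Q]$ plays no role, since the claim is a statement about a single complex number. Write $z_n:=f^n(0)$, so $z_0=0$, $z_1=c$, and $z_{n+1}=z_n^d+c$. The argument is an escape-radius estimate in the spirit of the classical proof that the Mandelbrot set is contained in the disc of radius $2$.

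\emph{Step 1.} Set $R:=|c|$ and suppose $R>2^{1/(d-1)}$, i.e.\ $R^{d-1}>2$. I claim by induction that $|z_n|\ge R$ for all $n\ge 1$, and in fact that $|z_n|$ grows. The base case is $|z_1|=R$. For the inductive step, if $|z_n|\ge R$ then
\[
|z_{n+1}|\ \ge\ |z_n|^d-|c|\ =\ |z_n|\bigl(|z_n|^{d-1}-R/|z_n|\bigr)\ \ge\ |z_n|\bigl(R^{d-1}-1\bigr).
\]
Here I use $|z_n|^{d-1}\ge R^{d-1}$ and $R/|z_n|\le 1$.

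\emph{Step 2.} Put $\lambda:=R^{d-1}-1$. Since $R^{d-1}>2$, we have $\lambda>1$. Step 1 gives $|z_{n+1}|\ge\lambda|z_n|$, which in particular keeps $|z_{n+1}|\ge R$ and closes the induction. Hence
\[
|z_n|\ \ge\ \lambda^{n-1}R\ \longrightarrow\ \infty,
\]
so the orbit is unbounded. This contradicts the hypothesis that $\{f^n(0)\}$ is bounded, and therefore $|c|\le 2^{1/(d-1)}$.

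The only delicate point is choosing the induction hypothesis correctly. One must carry $|z_n|\ge|c|$, rather than $|z_n|\ge 2^{1/(d-1)}$, so that the subtracted term $|c|$ is dominated by $|z_n|$. With that choice the estimate closes without further case analysis.
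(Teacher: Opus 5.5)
Your argument is correct. Carrying $|z_n|\ge |c|$ through the induction gives $|z_{n+1}|\ge (|c|^{d-1}-1)\,|z_n|$ with $|c|^{d-1}-1>1$, so the critical orbit escapes whenever $|c|>2^{1/(d-1)}$.

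The paper gives no proof of this statement; it imports the bound as part of Lemma~3.4 of \cite{Habegger}. Your direct escape-radius induction therefore makes the statement self-contained. It also gives slightly more than is stated: the explicit geometric growth $|f^n(0)|\ge (|c|^{d-1}-1)^{n-1}|c|$ for $|c|>2^{1/(d-1)}$. Equivalently, the archimedean multibrot set $\mathcal{M}_{d,v}$ lies in the closed disc of radius $2^{1/(d-1)}$, which is the form in which the paper later applies the bound to post-critically finite parameters (in Proposition~\ref{prop3.1}).

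Your remark that the normalization of the absolute value is irrelevant is also fine. Boundedness of the orbit does not depend on the normalization. The bound for the usual absolute value also implies the corresponding bound for any normalized absolute value $|c|^{e}$ with $0<e\le 1$.
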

\subsection{Bifurcation measure} 
We also let $\PP^1_{\mathrm{Berk},v}$ be the {\em Berkovich projective line} over $\C_v$.    This is a canonically defined path-connected compact Hausdorff space containing $\PP^1(\C_v)$ as a dense subspace. For each $v\in M_k$, we fix an embedding of $\overline{K}$ into $\C_v$. We remark here that if $v$ is archimedean, we have $\C_v \simeq \C$ and $\PP^1_{\mathrm{Berk},v} \simeq \PP^1(\C)$. In particular, for each $a\in \C_v$ and $r>0$, there is a point $\zeta(a,r)\in\PP^1_{\mathrm{Berk},v}$
corresponding to the closed disk $\overline{D}(a,r):=\{x\in\C_v : |x-a|_v\leq r\}$.
The Berkovich point $\zeta(0,1)$ corresponding to the closed unit disk is called
the \emph{Gauss point}. See (\cite[Chapters~1--2]{baker2010} or \cite{berko}) for more information on $\PP^1_{\mathrm{Berk},v}$.

For each $v \in M_k$ there is a distribution-valued Laplacian operator $\Delta$ on $\PP^1_{\mathrm{Berk},v}$. For its definition and some examples we refer the reader to  \cite{baker2010}. An  important example is the Laplacian of $\log^{+}|x|_v$. Note that the function
$\log^+|x|_v$, which is originally defined on $\PP^1(\C_v)\setminus\{\infty\}$, extends naturally to a continuous real valued function defined on $\PP^1_{\mathrm{Berk},v} \backslash \{ \infty \}$. The Laplacian of its extension, also denoted by $\log^{+}|x|_v$, is
\begin{equation}\label{lambdav potential}
\Delta \log^+|x|_v = \delta_{\infty} - \lambda_v,
\end{equation}
where $\delta_{\infty}$ is the dirac measure supported at $\infty$ and $\lambda_v$ is the uniform probability measure on the complex unit circle $\{ |x|_v = 1 \}$ when $v$ is archimedean and a point mass at the Gauss point of $\PP^1_{\mathrm{Berk},v}$ when $v$ is non-archimedean. A probability measure $\mu_v$ on $\PP^1_{\mathrm{Berk},v}$ is said to have {\em continuous potentials} if $\mu_v - \lambda_v = \Delta g$
for some continuous function $g : \mathbb{P}^1_{\mathrm{Berk}, v} \to \mathbb{R}$.  We call the function $g$ a potential of $\mu_v$ and note that any two potentials of $\mu_v$ differ by a constant. 

As in \cite{Benedetto}, for the polynomial $f(z)= z^d+c$, we denote the associated canonical local height function at $v$ by $\lambda_{d,c,v}$ and define the Green's function $G_{d,v}:\C_v\to\mathbb{R}$ by 
\begin{equation} \label{green}
    G_{d,v}(c):=\lambda_{d,c,v}(c) = \lim_{n\to\infty} \frac{1}{d^n} \lambda_v(f_{d,c}^n(c)).
\end{equation}
That is, $G_{d,v}$ measures the $v$-adic escape rate of the critical point of $f_{d,c}$, and hence $G_{d,v}$ is zero precisely on the multibrot set $\mathcal{M}_{d,v}$, and strictly positive on $\C_v \smallsetminus\mathcal{M}_{d,v}$. If $v$ is an archimedean place, so that $\C_v\cong\C$, the (potential-theoretic) Laplacian of $G_{d,v}$ is a probability measure $\mu_{d,v}$ on $\C$,
called the \emph{bifurcation measure} of the family $f_{d,c}$.
The support of $\mu_{d,v}$ is precisely the bifurcation locus $\partial\mathcal{M}_{d,v}$ of the family; see, for example, \cite[Proposition~3.3.(5)]{BD11} or \cite[Sections~4.1--4.2]{GKNY17}. If $v$ is a non-archimedean place, then the sequence $(|f_{d,c}^n(c)|_v)_{n\geq 1}$ is bounded if and only if $|c|_v\leq 1$.
In fact, we have the explicit formula $G_{d,v}(c)=\log\max\{1,|c|_v\}$,
which has a unique continuous extension to $\PP^1_{\mathrm{Berk},v}\smallsetminus\{\infty\}$. The Laplacian of $G_{d,v}$, when restricted to
$\PP^1_{\mathrm{Berk},v}\smallsetminus\{\infty\}$ the desired probability measure is $\mu_{d,v}=\delta_{\zeta(0,1)}$, the delta measure at the Gauss point.

Since the bifurcation measure $\mu_{d,v}$ is defined as the Laplacian of
$G_{d,v}(c)=\lambda_{d,c,v}(c)$, we can
recover the canonical local height $\lambda_{d,c,v}(c)$ by integrating
an appropriate kernel against $\mu_{d,v}$, as follows.

\begin{lemma}[Lemma 2.1, \cite{Benedetto}] \label{lem:locht} 
Let $d\geq 2$ be an integer, let $v\in M_k$ and let $\alpha \in \C_v$.
Let $\lambda_{d,\alpha,v}$ be the canonical local height function for the map $f_{d,\alpha}(z)=z^d+\alpha$.
Let $\mu_{d,v}$ be the bifurcation measure of the family $f_{d,c}$. Then
\begin{equation}
\label{eq:lochtint}
\int_{\PP^1_{\mathrm{Berk},v}} \log |x-\alpha|_v \, d\mu_{d,v}(x)
= \lambda_{d,\alpha,v}(\alpha) .
\end{equation}
\end{lemma}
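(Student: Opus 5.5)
Lemma~\ref{lem:locht} is the last statement before this point, so that is what I will prove.

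The plan is to check that both sides of \eqref{eq:lochtint} are potentials for the same measure and then to match their normalizations. Put $g(\alpha):=\int \log|x-\alpha|_v\,d\mu_{d,v}(x)$. Since $\mu_{d,v}=\Delta G_{d,v}$ on $\PP^1_{\mathrm{Berk},v}\setminus\{\infty\}$, and $\mu_{d,v}$ has compact support there, $g$ is the logarithmic potential of $\mu_{d,v}$. Hence, with the sign convention of \eqref{lambdav potential}, $\Delta g=\delta_\infty-\mu_{d,v}$ up to sign, and $g(\alpha)=\log|\alpha|_v+o(1)$ as $\alpha\to\infty$. On the other side, $G_{d,v}(\alpha)=\lambda_{d,\alpha,v}(\alpha)$ by \eqref{green}, and this function also has Laplacian $\delta_\infty-\mu_{d,v}$ in the same convention.

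The key steps, in order, are as follows.

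\emph{Step 1: harmonic difference.} The difference $g-G_{d,v}$ is harmonic on all of $\PP^1_{\mathrm{Berk},v}\setminus\{\infty\}$.

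\emph{Step 2: boundedness.} I show $g-G_{d,v}$ stays bounded near $\infty$. As $|\alpha|_v\to\infty$ we have $|f^n_{d,\alpha}(\alpha)|_v\approx|\alpha|_v^{d^n}$, so $G_{d,v}(\alpha)=\log|\alpha|_v+o(1)$. This is property (5) of the Call--Silverman heights applied with $c_v=1$ for the monic map, or a direct estimate in the parameter. Combined with the asymptotics of $g$, the difference is bounded.

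\emph{Step 3: constancy.} A bounded harmonic function on $\PP^1$ minus a point extends across that point, so $g-G_{d,v}$ is constant. Letting $\alpha\to\infty$, the constant equals $\lim(g-G_{d,v})=0$.

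In the non-archimedean case the computation is explicit and no potential theory is needed. Here $\mu_{d,v}=\delta_{\zeta(0,1)}$, and $\log|x-\alpha|_v$ evaluated at the Gauss point equals $\log\max\{1,|\alpha|_v\}$. This is exactly $G_{d,v}(\alpha)$ as given by the explicit formula.

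The main obstacle is the archimedean normalization, namely showing that the $o(1)$ error in $G_{d,v}(\alpha)-\log|\alpha|_v$ really tends to $0$ and not to some other constant. For this I will use the expansion $f^n_{d,\alpha}(\alpha)=\alpha^{d^n}\bigl(1+O(|\alpha|^{1-d})\bigr)$. Taking logarithms, dividing by $d^n$ and summing the telescoping errors gives an error of size $O(|\alpha|^{1-d})$, which goes to zero. I also need to confirm that $g$ is finite, i.e.\ continuous; this follows because $\mu_{d,v}$ has continuous potential $G_{d,v}$.
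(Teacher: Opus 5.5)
The paper gives no proof of its own here (the lemma is quoted from Benedetto--Ih). Your argument is the standard one and is correct: in the archimedean case $g$ and $G_{d,v}$ have the same Laplacian $\delta_\infty-\mu_{d,v}$ and are both $\log|\alpha|_v+o(1)$ at $\infty$ (i.e.\ $\mathcal{M}_{d,v}$ has capacity $1$), so their difference is a bounded harmonic function on $\mathbb{C}$ tending to $0$ and hence vanishes; in the non-archimedean case both sides equal $\log\max\{1,|\alpha|_v\}$ by evaluation at the Gauss point.

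The only slip is that $f^n_{d,\alpha}(\alpha)=\alpha^{d^n}\bigl(1+O(|\alpha|^{1-d})\bigr)$ is not uniform in $n$, since the factor $1+\alpha^{1-d}$ in $f_{d,\alpha}(\alpha)$ gets raised to roughly the power $d^{n-1}$. The telescoping should instead be run on the one-step identity $f^{n+1}_{d,\alpha}(\alpha)=f^{n}_{d,\alpha}(\alpha)^d\bigl(1+\alpha\, f^{n}_{d,\alpha}(\alpha)^{-d}\bigr)$, together with $|f^n_{d,\alpha}(\alpha)|\ge|\alpha|$ (by induction once $|\alpha|^{d-1}\ge 2$, with $|\cdot|$ the usual absolute value on $\mathbb{C}$). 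This gives $G_{d,v}(\alpha)-\log|\alpha|_v=O(|\alpha|^{1-d})$ with implied constant at most $\frac{2}{d-1}$.
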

\subsection{The Arakelov--Zhang pairing}
For two rational maps $\varphi$ and $\psi$ defined on $\mathbb{P}^1$ over a number field $K$, each having degree at least two, the Arakelov--Zhang pairing $\langle \varphi, \psi \rangle$ captures a deep relationship between their respective canonical height functions $h_{\varphi}$ and $h_{\psi}$. 

To define Arakelov--Zhang pairing we recall some notation (see \cite{petsche2012}). Let $K$ be a number field and let $M_K$ denote the set of places of $K$. For each $v \in M_K$, let $K_v$ be the completion of $K$ at $v$ and let $\mathbb{C}_v$ be the completion of an algebraic closure of $K_v$. Let $\mathcal{L}$ be a line bundle on $\mathbb{P}^1$. For each place $v \in M_K$, the line bundle $\mathcal{L}$ extends to a line bundle $\mathcal{L}_v$ on the Berkovich projective line $\mathbb{P}^1_{\mathrm{Berk},v}$. A continuous metric $\|\cdot\|_v$ on $\mathcal{L}_v$ is a continuous function $\|\cdot\|_v : \mathcal{L}_v \to \mathbb{R}_{\ge 0}$ which induces a norm on each fiber $\mathcal{L}_z$ as a $\mathbb{C}_v$-vector space. The metric $\|\cdot\|_v$ is said to be \emph{semi-positive} if for any section $s$, the function $\log |s(z)|$ on $\mathbb{P}^1_{\mathrm{Berk},v}$ is subharmonic. It is said to be \emph{integrable} if $\log |s(z)|:\mathbb{P}^1 \to \mathbb{R} \cup \{-\infty\}$ can be written as the difference of two subharmonic functions.      

For $\mathcal{L} = \mathcal{O}(1)$, there is a standard metric $\|\cdot\|_{\mathrm{st},v}$ defined by
$$\|s(z)\|_{\mathrm{st},v} = \frac{|s(z_1,z_2)|_v}{\max\{|z_1|_v, |z_2|_v\}},$$ where $s(z_1,z_2)$ is the linear homogeneous polynomial in $\mathbb{C}_v[z_1,z_2]$ representing the section $s$ and
$z = (z_1:z_2) \in \mathbb{P}^1(\mathbb{C}_v)$. An integrable adelic metric on $\mathcal{O}(1)$ is a family of metrics $(\|\cdot\|_v)_{v \in M_K}$ such that $\|\cdot\|_v$ is a continuous integrable metric on $\mathcal{O}(1)$ over $\mathbb{P}^1_{\mathrm{Berk},v}$ for each $v$ and $\|\cdot\|_v = \|\cdot\|_{\mathrm{st},v}$ for all but finitely many places $v$.

For a global section $s \in \Gamma(\mathbb{P}^1, \mathcal{O}(1))$ and a point $z \in \mathbb{P}^1(K) \setminus \{\text{div}(s)\}$, the height is given by:
\begin{equation} \label{eqheight}
	h_\mathcal{L}(z) = \sum_{v \in M_K} N_v \log \|s(z)\|_{st,v}^{-1}
\end{equation}
where $N_v = [K_v : \mathbb{Q}_v]/[K : \mathbb{Q}]$.
Let $\varphi: \mathbb{P}^1 \to \mathbb{P}^1$ be a rational function of degree $d \geq 2$ defined over $K$ and let $\epsilon : \mathcal{O}(d) \overset{\sim}{\to} \varphi^* \mathcal{O}(1)$ be a $K$-isomorphism, known as a $K$-polarization. The canonical adelic metric associated to $(\varphi, \epsilon)$ on $\mathcal{O}(1)$ is then the family $\|\cdot\|_{\varphi, \epsilon} = (\|\cdot\|_{\varphi, \epsilon, v})$ defined for all $v \in M_K$. The canonical height relative to $\varphi$ is defined by
\begin{equation} \label{eqheight1}
	h_{\varphi}(z) = \sum_{v \in M_K} N_v \log \|s(z)\|_{\varphi, \epsilon, v}^{-1}
\end{equation}
for all $z \in \mathbb{P}^1(K) \setminus \{\text{div}(s)\}$.
Let $\alpha \in K$ and define the section $s(z) = z_1 - \alpha z_2$ of $\mathcal{O}(1)$. Define $L_{\alpha}$ to be the adelic line bundle where, for each place $v \in M_K$, the metric is given by $\log \|s(z)\|_{st, v}^{-1} = \lambda_{\alpha, v}(z)$. 
Let $\mathcal{G} \subset \mathbb{P}^1(K)$ be a finite $\mathrm{Gal}(\overline{K}/K)$-invariant set and let $s$ be a section of $\mathcal{O}(1)$ such that $\mathrm{div}(s) \notin \mathcal{G}$.
The height of $\mathcal{G}$ with respect to the adelic line bundle $\mathcal{L}_{\alpha}$ is defined by $$h_{\mathcal{L}_{\alpha}}(\mathcal{G}) = \frac{1}{|\mathcal{G}|}
\sum_{z \in \mathcal{G}} \sum_{v \in M_K} N_v \log \|s(z)\|_v^{-1},$$ where $N_v = \frac{[K_v:\mathbb{Q}_v]}{[K:\mathbb{Q}] }$.

For a rational map $\varphi : \mathbb{P}^1 \to \mathbb{P}^1$  defined over $K$
with degree $d \ge 2$, let $L_\varphi$ denote the canonical adelic line bundle associated to $\varphi$ (see Section~3.5 of \cite{petsche2012}). The associated height function $h_{\mathcal{L}_\varphi}$ coincides with the canonical
height $h_\varphi$.

For any two rational maps $\varphi : \mathbb{P}^1 \to \mathbb{P}^1$ and $\psi : \mathbb{P}^1 \to \mathbb{P}^1$, defined over $K$, $\mathcal{L}_\varphi$ and $\mathcal{L}_\psi$ be the adelic line bundle associated to $\varphi$ and $\psi$ respectively. Let $s, t \in \Gamma(\mathbb{P}^1, \mathcal{O}(1))$ be two sections with $\operatorname{div}(s) \neq \operatorname{div}(t)$. We define the local Arakelov--Zhang pairing of $\mathcal{L}_\varphi$ and $\mathcal{L}_\psi$, with respect to the sections $s$ and $t$, by
\begin{align*}
    \langle \mathcal{L}_\varphi, \mathcal{L}_\psi \rangle_{s, t, v} &= - \int \left\{ \log \| s(z) \|_{\varphi,\epsilon_{\varphi}} \right\} \, d\Delta \left\{ \log \| t(z) \|_{\psi,\epsilon_{\psi}} \right\} \\
    &= \log \| s(\operatorname{div}(t)) \|_{\varphi,\epsilon_{\varphi}} - \int \log \| s(z) \|_{\varphi,\epsilon_{\varphi}} \, d\mu_{\psi}(z),
\end{align*}
where $\epsilon_{\varphi}$ and $\epsilon_{\psi}$ are any polarizations of
$\varphi$ and $\psi$, respectively and $\Delta \{-\log \|s(z)\|_{\varphi, \epsilon}\}=\del_{\mathrm{div}(s)}(z)-\mu_\varphi(z)$. Note that $\langle \mathcal{L}_\varphi, \mathcal{L}_\psi \rangle_{s, t, v}$ does not depend on the choice of polarizations $\epsilon_{\varphi}$ and $\epsilon_{\psi}$. The global Arakelov--Zhang pairing is then defined as 
\begin{align*}
\langle \mathcal{L}_\varphi, \mathcal{L}_\psi \rangle &= \sum_{v \in M_K} N_v \, \langle \mathcal{L}_\varphi, \mathcal{L}_\psi \rangle_{s,t,v} + h_{\mathcal{L}_{\varphi}}\!\left( \operatorname{div}(t) \right) + h_{\mathcal{L}_{\psi}}\!\left( \operatorname{div}(s) \right)\\
&=\sum_{v \in M_K} N_v \left( - \int \log \| s(z) \|_{\varphi,\epsilon_{\varphi},v} \, d\mu_{\psi,v}(z) - \log \| t(\operatorname{div}(s)) \|_{\psi,\epsilon_{\psi},v} \right).
\end{align*} 
One of the key results we require related to the Arakelov--Zhang pairing is the following.
\begin{theorem}[Corollary 12, \cite{petsche2012}]\label{lemmapairing}
    Let $(x_n)_{n\geq 0}$ be a sequence of distinct points in $\mathbb{P}^1(\overline{K})$ such that $h_{\mathcal{L}_{\psi}}(x_n) \to 0$ then $h_{\mathcal{L}_{\varphi}}(x_n) \to \langle \mathcal{L}_\varphi, \mathcal{L}_\psi \rangle$.
\end{theorem}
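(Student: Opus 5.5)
The plan is to reduce the statement to the equidistribution theorem for points of small canonical height, applied place by place. Throughout, fix a number field $K$ over which $\varphi$, $\psi$ and the polarizations $\epsilon_\varphi,\epsilon_\psi$ are defined, and fix a section $s\in\Gamma(\mathbb{P}^1,\mathcal{O}(1))$.

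\emph{Step 1: express the height difference through one continuous function per place.} For each $v\in M_K$ put
\[
g_v(z):=\log\|s(z)\|_{\psi,\epsilon_\psi,v}-\log\|s(z)\|_{\varphi,\epsilon_\varphi,v}.
\]
Both metrics are continuous metrics on the same line bundle, so the zero of $s$ cancels in the quotient. Hence $g_v$ extends to a continuous real-valued function on all of $\mathbb{P}^1_{\mathrm{Berk},v}$. Both canonical metrics agree with the standard metric for all but finitely many $v$, so $g_v\equiv 0$ outside a finite set $T\subset M_K$. By \eqref{eqheight1}, for any finite $\mathrm{Gal}(\overline K/K)$-stable set $\mathcal{G}$ avoiding $\mathrm{div}(s)$ we get
\[
h_\varphi(\mathcal{G})-h_\psi(\mathcal{G})=\sum_{v\in T}N_v\int g_v\,d[\mathcal{G}]_v ,
\]
where $[\mathcal{G}]_v$ is the uniform probability measure on the image of $\mathcal{G}$ in $\mathbb{P}^1(\mathbb{C}_v)$. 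We apply this to $\mathcal{G}_n=\mathcal{G}(x_n)$, discarding the at most finitely many $n$ with $x_n\in\mathrm{div}(s)$. Heights of a point and of its Galois orbit agree, so the left side is $h_\varphi(x_n)-h_\psi(x_n)$.

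\emph{Step 2: pass to the limit using equidistribution.} The $x_n$ are distinct and $h_\psi(x_n)\to 0$. By Northcott's property for $h_\psi$ (it differs from the Weil height by a bounded amount), only finitely many $x_n$ have degree $\le D$ for each $D$, so $|\mathcal{G}_n|\to\infty$. The equidistribution theorem for small points of the canonical adelic metric of $\psi$ (Baker--Rumely, Chambert-Loir, Favre--Rivera-Letelier) then gives $[\mathcal{G}_n]_v\to\mu_{\psi,v}$ weakly on $\mathbb{P}^1_{\mathrm{Berk},v}$ for every $v$. Since $g_v$ is continuous and $T$ is finite, we may interchange limit and sum. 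Using $h_\psi(x_n)\to 0$ this yields
\[
h_\varphi(x_n)\;\longrightarrow\;\sum_{v\in T}N_v\int g_v\,d\mu_{\psi,v}.
\]

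\emph{Step 3: identify the limit with the pairing.} We use the second expression for the global pairing with a section $t$ satisfying $\mathrm{div}(t)\neq\mathrm{div}(s)$. Subtracting the same expression for $\langle\mathcal{L}_\psi,\mathcal{L}_\psi\rangle$ computed with the same $s,t$, the terms $\log\|t(\mathrm{div}(s))\|_{\psi,v}$ cancel. This gives
\[
\langle\mathcal{L}_\varphi,\mathcal{L}_\psi\rangle-\langle\mathcal{L}_\psi,\mathcal{L}_\psi\rangle=\sum_v N_v\int g_v\,d\mu_{\psi,v}.
\]
It therefore remains to show $\langle\mathcal{L}_\psi,\mathcal{L}_\psi\rangle=0$. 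For this we would compute directly from the local definition. We take $s,t$ with divisors at two points, say preperiodic points of $\psi$, so that the correction terms $h_{\mathcal{L}_\psi}(\mathrm{div}\,s)$ and $h_{\mathcal{L}_\psi}(\mathrm{div}\,t)$ vanish. We then use the invariance $\psi^*\mu_\psi=d\,\mu_\psi$ together with the functional equation of the canonical metric under the polarization $\epsilon_\psi$; the local terms become $d^{-n}$ times local terms of the same type, so their sum is forced to be $0$. Alternatively one can quote the standard fact that the canonical adelic metric has vanishing self-intersection, i.e.\ its Zhang essential minimum is $0$.

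\emph{Main obstacle.} The real input is the equidistribution theorem in Step 2, which must hold on Berkovich space at each non-archimedean place, and which needs $|\mathcal{G}_n|\to\infty$. That is exactly why the $x_n$ are assumed distinct. Everything else is bookkeeping. The one genuinely delicate point is the continuity of $g_v$ across $\mathrm{div}(s)$ and at type II--IV points, which follows from continuity of both metrics on $\mathcal{L}_v$.
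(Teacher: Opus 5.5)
Your proposal is correct and is essentially the proof in Petsche--Szpiro--Tucker, which this paper cites without proving. You write $h_\varphi-h_\psi$ as a finite sum over places of integrals of the continuous functions $\log(\|s\|_{\psi,v}/\|s\|_{\varphi,v})$ against Galois-orbit measures, pass to the limit by equidistribution of $\psi$-small points on each $\mathbb{P}^1_{\mathrm{Berk},v}$, and identify the limit as $\langle\mathcal{L}_\varphi,\mathcal{L}_\psi\rangle-\langle\mathcal{L}_\psi,\mathcal{L}_\psi\rangle$.

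The one input you only sketch, $\langle\mathcal{L}_\psi,\mathcal{L}_\psi\rangle=0$, is standard and fine to quote. The clean argument is that the polarization makes $\psi^*\mathcal{L}_\psi\cong\mathcal{L}_\psi^{\otimes d}$ an isometry, so the projection formula and bilinearity give $d^2\langle\mathcal{L}_\psi,\mathcal{L}_\psi\rangle=d\,\langle\mathcal{L}_\psi,\mathcal{L}_\psi\rangle$. Your ``$d^{-n}$'' sketch is vaguer than this. Your ``i.e.\ its essential minimum is $0$'' also needs Zhang's successive-minima theorem together with $h_\psi\ge 0$ before it gives vanishing self-intersection.
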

\subsection{Equidistribution of post-critically finite points}
Let $M$ be a metric space.
%  which is assumed to be separable and complete.
A sequence  $ \{ \mu_n \}$  of  measures on $M$ is said to 
{\it converge weakly} to a measure $\mu$
%, denoted by $\mu_n\Rightarrow \mu$, 
if
\[
\lim_{n\to \infty}\,\int_{M} \,f d\mu_n = \int_{M}\, f d\mu
\]
for every bounded, continuous function $f : M \to \R$.
For any finite subset $S \subseteq \C_v$,
we denote by $\delta_S$ the probability
measure $\delta_S := \frac{1}{|S|} \sum_{z \in S} \delta_z$, where
$\delta_z$ is the Dirac probability measure supported at the single point
$z \in \C_v$.
Finally, we say a sequence $ \{ S_n \}$ of finite subsets of $\C_v$ is
{\it equidistributed} with respect to the measure $\mu$ if the sequence
$\{ \delta_n \} := \{ \delta_{S_n} \}$ of probability measures converges weakly to $\mu$. 
Now we recall the equidistribution theorem for post-critically finite parameters (see \cite[Theorem 3.1]{GKNY17}).
\begin{theorem}[\cite{GKNY17}] \label{equistribution}
Let $f : X' \times \mathbb{C} \to \mathbb{C}$ be a non-isotrivial, one-dimensional algebraic family of degree $d \ge 2$ unicritical polynomials over a number field $K$, where $X'$ is a Zariski dense, open subset of an irreducible, smooth curve $X$ defined over $\mathbb{C}$. The set of parameters $t \in X'(\overline{K})$,  for which $f(t,z) : \mathbb{C} \to \mathbb{C}$ is post-critically finite equidistributes on the parameter space $X(\mathbb{C})$ (with respect to the normalized bifurcation measure).
\end{theorem}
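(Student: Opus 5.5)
The plan is to deduce Theorem~\ref{equistribution} from an arithmetic equidistribution theorem for points of small height on curves, namely Yuan's theorem or Thuillier's theorem for adelically metrized line bundles. This is the route of Baker--Rumely and Favre--Rivera-Letelier in the Mandelbrot case. After a change of coordinates over a finite extension of $K$, write the family as $f_t(z)=z^d+c(t)$, where $c:X\to\PP^1$ is a non-constant rational function. Non-constancy is exactly what non-isotriviality means for a unicritical family. For each place $v$ define the parameter Green function
\[
G_v(t):=\lambda_{d,c(t),v}\bigl(c(t)\bigr)=G_{d,v}\bigl(c(t)\bigr),
\]
using \eqref{green}, and define $\hhat_{\mathrm{bif}}(t):=\sum_v N_v G_v(t)$ for $t\in X'(\overline{K})$. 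By the properties of canonical heights, $\hhat_{\mathrm{bif}}(t)=\hhat_{f_t}(c(t))\ge 0$. It vanishes exactly when $0$ is preperiodic for $f_t$, that is, exactly at the post-critically finite parameters.

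Next, interpret $\hhat_{\mathrm{bif}}$ as the height attached to a semipositive adelic metric on $\mathcal{L}=c^*\mathcal{O}(1)$, a line bundle of degree $\deg c>0$ on $X$. At each place, $G_{d,v}$ is continuous and subharmonic on $\PP^1_{\mathrm{Berk},v}\setminus\{\infty\}$, with $G_{d,v}(x)=\log|x|_v+O(1)$ near $\infty$ (Lemma~\ref{absolut} and property (5)). Hence $G_{d,v}$ defines a continuous semipositive metric on $\mathcal{O}(1)$ whose curvature is $\mu_{d,v}$. Pulling back by $c$ gives a continuous semipositive metric on $\mathcal{L}$ with curvature $c^*\mu_{d,v}$. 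For all but finitely many $v$, $G_{d,v}=\log^+|\cdot|_v$ equals the standard metric, so the family is adelic. The normalized curvature $\frac{1}{\deg c}\,c^*\mu_{d,v}$ is by definition the normalized bifurcation measure on $X(\C)$. At the archimedean place it is supported on $c^{-1}(\partial\mathcal{M}_{d,v})$.

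Then verify the two hypotheses of the equidistribution theorem. First, the self-intersection $\widehat{\mathcal{L}}^2=0$, which makes the essential minimum zero and the post-critically finite parameters a generic sequence of small points. By Zhang's successive minima inequality, it suffices to produce infinitely many points of height zero. So I will show there are infinitely many post-critically finite parameters, via a Montel argument. Near a point of the archimedean bifurcation locus $\partial\mathcal{M}_{d,v}$ (nonempty because $c$ is non-constant), the family $t\mapsto f_t^n(0)$ is not normal. Hence one can solve $f_t^n(0)=f_t^m(0)$, or $f_t^n(0)=0$, at infinitely many distinct parameters. Alternatively, one can count roots of the Gleason-type polynomials $f^n_{c}(0)-f^m_c(0)$ in $c$ and pull back by $c$. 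Either way the post-critically finite parameters in $X'(\overline{K})$ form an infinite Galois-stable set. Second, any infinite sequence of distinct Galois orbits of such parameters consists of points of height zero. Yuan's theorem then gives weak convergence of $\delta_{\mathcal{G}(t_n)}$ to the normalized curvature measure at each place, in particular on $X(\C)$.

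The step I expect to be the main obstacle is checking that the metric is genuinely continuous, with no singularities, on all of $X$ and not just on $X'$. The punctures $X\setminus X'$ map under $c$ either to $\infty$, where the singularity must be absorbed exactly by the pole of the section of $\mathcal{L}$, or to finite values where $c$ stays bounded. Continuity in the parameter of $\lambda_{d,c,v}(c)$ at every place, including the finitely many bad places, also needs care. Here I would use the uniform estimate $|G_{d,v}(x)-\log^+|x|_v|\le C_v$ coming from the telescoping Call--Silverman argument. This makes $G_v\circ c$ a uniform limit of continuous model metrics, which is exactly the regularity Yuan's theorem requires.
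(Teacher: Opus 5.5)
The paper does not prove this statement: it is quoted from \cite[Theorem 3.1]{GKNY17} and used as a black box. Your outline is essentially the argument of that source, and it is correct: pull back along $c$ the adelic metric on $\mathcal{O}(1)$ given by the Green functions $G_{d,v}$, deduce $\widehat{\mathcal{L}}^2=0$ from Zhang's inequality and the infinitude of post-critically finite parameters, and apply Yuan's theorem.

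Two small repairs are needed.

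First, putting $a(t)(z-b(t))^d+e(t)$ in the form $z^d+c(t)$ requires a $(d-1)$-st root of $a(t)$. So in general $c$ is defined only on a finite cover $Y\to X$, not merely after enlarging $K$; for instance $tz^3+1$ gives $c=\sqrt{t}$. You can either run the argument on $Y$ and push the Galois-orbit measures forward to $X$, or use the rational function $c^{d-1}$ on $X$ together with the invariance of $G_{d,v}$ under multiplication by $(d-1)$-st roots of unity.

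Second, continuity of the metric at $\infty$ does not follow from $G_{d,v}(x)=\log|x|_v+O(1)$ alone. Neither Lemma~\ref{absolut} nor property (5) supplies it, since property (5) concerns a fixed map and gives no uniformity in the parameter. The uniform telescoping estimate you invoke at the end, applied to the monic polynomials $f^n_{x}(x)$ of degree $d^n$ in $x$, does give it. So does the removable-singularity theorem applied to the bounded harmonic function $G_{d,v}-\log|x|_v$ on a punctured neighbourhood of $\infty$ at archimedean places. At every finite place $G_{d,v}=\log^+|\cdot|_v$ exactly, so the adelic condition is automatic there.
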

The equidistribution result states that parameters $c$ for which the critical point $0$ has finite forward orbit under $f_{d,c}$ are equidistributed with respect to the bifurcation measure, was first shown by Levin \cite{levin} (in the classical sense of equidistribution) and it was shown in the stronger (arithmetic) form by Baker and Hsia \cite[Theorem 8.15]{BakerHsia}. The following result is due to Levin \cite{levin} for one-parameter family of unicritical polynomials of degree $d$ and for $d=2$ it was explicitly stated in \cite[Theorem 8.15]{BakerHsia}.
\begin{lemma}[\cite{levin}]\label{eqdistlem}
Let $\mathcal{M}_{d, v}$ be the set of complex numbers $c$ for which $f_{d, c}(z) = z^d + c$ has connected Julia set, that is, the generalized Mandelbrot set. Then
$$
\lim_{n \to \infty} \frac{1}{d^n}
\sum_{f_{d,c}^n(0)=0} \delta_c = \mu,
$$
where $\mu$ is the harmonic measure on $\mathcal{M}_{d, v}$.   
\end{lemma}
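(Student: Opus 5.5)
Since $P_n(c):=f_{d,c}^n(0)$ is a monic polynomial in $c$ of degree $d^{n-1}$, the plan is to write the root-counting measure as a Laplacian and pass to the limit at the level of potentials. We normalize the counting measure by the number of roots; the stated factor $1/d^n$ should be read as normalization by the degree of $P_n$, otherwise the limit has total mass $1/d$ rather than $1$. The roots $c$ with $P_n(c)=0$ counted with multiplicity satisfy
\[
\nu_n:=\frac{1}{d^{n-1}}\sum_{P_n(c)=0}\delta_c=\Delta u_n,\qquad u_n(c):=\frac{1}{d^{n-1}}\log|P_n(c)| .
\]
We will show $u_n\to G_{d,v}$ in $L^1_{\mathrm{loc}}(\mathbb{C})$. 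Since $\Delta$ is continuous in the distributional topology, this gives $\nu_n\to\Delta G_{d,v}=\mu_{d,v}$. This is the harmonic (equilibrium) measure of $\mathcal{M}_{d,v}$, because $G_{d,v}$ is the Green's function of $\mathbb{C}\setminus\mathcal{M}_{d,v}$ with pole at $\infty$: it is harmonic off $\mathcal{M}_{d,v}$, vanishes on it, and equals $\log|c|+O(1)$ at $\infty$.

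The first step is convergence off the Mandelbrot set. Since $P_n(c)=f_{d,c}^{n-1}(c)$, the definition \eqref{green} together with property (5) of the Call--Silverman heights gives $u_n\to G_{d,v}$ locally uniformly on $\mathbb{C}\setminus\mathcal{M}_{d,v}$. The point is that for $c\notin\mathcal{M}_{d,v}$ the orbit escapes, so eventually $\log|P_n|=\log^+|P_n|$, and the standard estimate $\bigl|\log^+|z^d+c|-d\log^+|z|\bigr|\le C$ holds on compacts. The second step is a uniform upper bound. On a compact set $\{|c|\le R\}$, the elementary escape estimate behind Lemma~\ref{absolut} gives $u_n\le G_{d,v}+o(1)$ uniformly, so $\limsup u_n\le 0$ on $\mathcal{M}_{d,v}$.

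Next we apply Hartogs' compactness theorem to the subharmonic functions $u_n$, which are locally uniformly bounded above. Any subsequence has a further subsequence that either tends to $-\infty$ locally uniformly or converges in $L^1_{\mathrm{loc}}$ to a subharmonic $u$. The first alternative is excluded because $u_n\to G_{d,v}>0$ off $\mathcal{M}_{d,v}$. Hence $u=G_{d,v}$ on $\mathbb{C}\setminus\mathcal{M}_{d,v}$ and $u\le 0$ on $\mathcal{M}_{d,v}$. Moreover $\nu:=\Delta u$ is a probability measure supported on $\mathcal{M}_{d,v}$, since the roots lie in $\mathcal{M}_{d,v}$ (they are post-critically finite), and $u(c)=\log|c|+o(1)$ at $\infty$. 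It therefore suffices to show $\nu=\mu_{d,v}$, as then $u=G_{d,v}$ and the whole sequence converges.

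The main obstacle is this identification, because $u$ is a priori only known to be $\le 0$ on $\mathcal{M}_{d,v}$, and the interior components of $\mathcal{M}_{d,v}$ contain all the roots, so no harmonicity argument is available there. I would use the uniqueness of the equilibrium measure. The potential $u=\int\log|c-x|\,d\nu(x)$ satisfies $u\le 0=\gamma$ on $\mathcal{M}_{d,v}$, where the Robin constant is $\gamma=0$ because $G_{d,v}=\log|c|+o(1)$, i.e.\ $\mathcal{M}_{d,v}$ has capacity $1$. Integrating against $\nu$ gives energy $I(\nu)=\int u\,d\nu\le 0=I(\mu_{d,v})$. The equilibrium measure is the unique maximizer of $I$ among probability measures on $\mathcal{M}_{d,v}$, so $\nu=\mu_{d,v}$. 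As a cross-check, upper semicontinuity and the continuity of $G_{d,v}$ give $u=0$ on $\partial\mathcal{M}_{d,v}$, consistent with this.
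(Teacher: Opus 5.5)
The paper gives no proof of this lemma; it is quoted from Levin (and from Baker--Hsia for $d=2$), so your argument has to stand on its own. Your normalization remark is correct: $f_{d,c}^n(0)$ has degree $d^{n-1}$ in $c$, so the stated factor $1/d^n$ produces total mass $1/d$. The first part of your argument is also sound. The correct tool there is the telescoping estimate rather than property (5): it gives $d^{-(n-1)}\log^+|P_n|\to G_{d,v}$ locally uniformly on all of $\C$, hence both convergence off $\mathcal{M}_{d,v}$ and the upper bound. Hartogs then yields a subsequential limit $u=\int\log|c-x|\,d\nu(x)$ with $u=G_{d,v}$ off $\mathcal{M}_{d,v}$ and $u\le 0$ on $\mathcal{M}_{d,v}$. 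Your upper-semicontinuity remark correctly adds $u=0$ on $\partial\mathcal{M}_{d,v}$.

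The gap is the identification $\nu=\mu_{d,v}$. With $I(\nu)=\iint\log|x-y|\,d\nu\,d\nu$, the equilibrium measure maximizes $I$, and $\max I=\log\mathrm{cap}(\mathcal{M}_{d,v})=0$. So $I(\nu)\le 0$ holds for \emph{every} probability measure on $\mathcal{M}_{d,v}$, and uniqueness of the maximizer needs the opposite inequality $I(\nu)\ge 0$.

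Nothing you have proved about $u$ can supply this. Take $K=\overline{\mathbb{D}}$ and the roots of $z^n$. Then $u=\log|c|$ equals $G_K$ off $K$, is $\le 0$ on $K$, vanishes on $\partial K$, and is $\log|c|+o(1)$ at infinity. Yet $\Delta u=\delta_0$ is not the equilibrium measure. You need an input specific to $P_n$ that prevents mass from accumulating in the interior. Either of the following works.

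\emph{Dynamical route.} Every root of $P_n$ is the center of a hyperbolic component. Each hyperbolic component has exactly one center, and any other interior component has none. So each component $U$ of $\mathrm{int}\,\mathcal{M}_{d,v}$ satisfies $\nu_n(U)\le d^{-(n-1)}$. Hence $\nu(U)\le\liminf\nu_{n_k}(U)=0$, so $\mathrm{supp}\,\nu\subset\partial\mathcal{M}_{d,v}$. Combined with $u=0$ on $\partial\mathcal{M}_{d,v}$, this gives $I(\nu)=\int u\,d\nu=0$.

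\emph{Arithmetic route.} $P_n\in\mathbb{Z}[c]$ is monic with simple roots, since $P_n'=dP_{n-1}^{d-1}P_{n-1}'+1$ rules out a double root at an algebraic integer. So its discriminant is a nonzero integer, i.e.\ $\sum_{i\ne j}\log|c_i-c_j|\ge 0$. Truncate the kernel at $-M$, pass to the weak limit, and let $M\to\infty$; this gives $I(\nu)\ge 0$. This route applies directly to any weak limit of the $\nu_n$, so it makes the Hartogs step unnecessary.
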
 

We can now state the quantitative equidistribution result, which yields a precise estimate on the speed of convergence. For a detailed proof, refer to (\cite[Theorem 5]{favre}, \cite[Theorem 3]{Dujardin}).
\begin{proposition}\label{quantequi}
With notation as in Lemma \ref{eqdistlem}, let $\mathcal{G}_n \subset \mathbb{C}_v$ be a sequence of disjoint finite sets, invariant under the absolute Galois group of $\mathbb{Q}$ and contained in the union
$\bigcup_{n \ge k} \{ c \in \mathbb{C}_v: f_{d,c}^n(0) = f_{d,c}^k(0) \}$.
Then there exists a constant $C >0$ and for any compactly supported $\mathcal{C}^1$ function $\varphi$, we have
	\begin{equation}\label{equid}
		\left|
\frac{1}{|\mathcal{G}_n|} \sum_{c \in \mathcal{G}_n} \varphi(c) - \int \varphi \, d\mu \right|_v \le C \left( \frac{\log |\mathcal{G}_n|}{|\mathcal{G}_n|} \right)^{\frac{1}{2}}
\sup \{ |\varphi|_v, |\varphi'|_v \},
	\end{equation}
where $|\mathcal{G}_n|$ denotes the cardinality of $\mathcal{G}_n$ and $\varphi'$ is the differentiation of $\varphi$.  
\end{proposition}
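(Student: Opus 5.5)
The plan is to follow the Favre--Rivera-Letelier strategy: control the mutual energy between the discrete measure $\delta_{\mathcal{G}_n}$ (suitably regularized) and the bifurcation measure $\mu=\mu_{d,v}$ by an arithmetic argument, and then pass from energy to test functions by Cauchy--Schwarz for the Dirichlet form. Write $N=|\mathcal{G}_n|$ and $\mathcal{G}=\mathcal{G}_n$.

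\emph{Step 1 (height and regularization).} Every $c\in\mathcal{G}$ is post-critically finite. Hence $G_{d,w}(c)=0$ at every place $w$, the canonical height of $\mathcal{G}$ attached to the adelic measure $(\mu_{d,w})_w$ vanishes, and each $c$ is an algebraic integer. At each archimedean place I replace $\delta_{\mathcal{G}}$ by $\delta_{\mathcal{G}}^{\epsilon}$, the average of the normalized arc-length measures on the circles $|x-c|_v=\epsilon$. I will need the following facts.
\begin{itemize}
\item $G_{d,v}$ is H\"older continuous of some exponent $\kappa\in(0,1]$, which is classical for the multibrot family.
\item By Lemma~\ref{absolut}, $\mathcal{M}_{d,v}$ lies in the disc of radius $2^{1/(d-1)}$.
\end{itemize}
Together these give $|\int G\,d\delta_{\mathcal{G}}^{\epsilon}-\int G\,d\delta_{\mathcal{G}}|\le C\epsilon^{\kappa}$ for the relevant potentials.

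\emph{Step 2 (energy bound via the product formula).} Define the energy pairing $(\rho,\rho)=-\iint\log|x-y|_v\,d\rho\,d\rho$ for signed measures of total mass zero. This form is nonnegative, and its square root dominates pairings against $\mathcal{C}^1$ functions, since $(\varphi,\varphi)^{1/2}\lesssim\|\varphi'\|_{L^2}$ on a fixed compact set. I then expand
\[
(\delta^{\epsilon}_{\mathcal{G}}-\mu,\ \delta^{\epsilon}_{\mathcal{G}}-\mu)
\]
using Lemma~\ref{lem:locht}. This lemma converts $\int\log|x-c|\,d\mu$ into $G_{d,v}(c)=0$, up to the $\epsilon$-smoothing error from Step 1. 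The energy of $\mu$ itself is zero, because its capacity is $1$ (the Green function $G_{d,v}$ has constant term $0$ at $\infty$).

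The remaining term is the smoothed self-energy of $\mathcal{G}$. Compared with the diagonal-free sum
\[
\frac{1}{N^2}\sum_{x\neq y\in\mathcal{G}}\log|x-y|_v ,
\]
it differs by at most $\frac{\log(1/\epsilon)}{N}+O(\epsilon^{\kappa})$. Since $\mathcal{G}$ is Galois-stable and consists of algebraic integers, $\prod_{x\neq y}(x-y)$ is a nonzero rational integer; nonvanishing uses that the elements of $\mathcal{G}$ are distinct. By the product formula, the archimedean sum of these logarithms is therefore $\ge 0$. The non-archimedean contributions vanish because there $\mu=\delta_{\zeta(0,1)}$ and all points lie in the closed unit disc. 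Summing over all conjugate archimedean embeddings (the set is invariant under $\mathrm{Gal}(\overline{\mathbb{Q}}/\mathbb{Q})$), this yields
\[
(\delta^{\epsilon}_{\mathcal{G}}-\mu,\ \delta^{\epsilon}_{\mathcal{G}}-\mu)\le C\Big(\frac{\log(1/\epsilon)}{N}+\epsilon^{\kappa}\Big).
\]

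\emph{Step 3 (test functions and optimization).} For a compactly supported $\mathcal{C}^1$ function $\varphi$ I split
\[
\Big|\int\varphi\,d(\delta_{\mathcal{G}}-\mu)\Big|\le\Big|\int\varphi\,d(\delta_{\mathcal{G}}-\delta^{\epsilon}_{\mathcal{G}})\Big|+(\varphi,\varphi)^{1/2}(\delta^{\epsilon}_{\mathcal{G}}-\mu,\delta^{\epsilon}_{\mathcal{G}}-\mu)^{1/2}.
\]
The first term is at most $\epsilon\sup|\varphi'|$, and the second is at most $C\sup\{|\varphi|,|\varphi'|\}\big(\tfrac{\log(1/\epsilon)}{N}+\epsilon^{\kappa}\big)^{1/2}$. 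Taking $\epsilon=N^{-1/\kappa}$ (or $N^{-1}$ when $\kappa=1$) gives the bound $(\log N/N)^{1/2}$ claimed in \eqref{equid}.

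The main obstacle is Step 2. It requires a genuine modulus of continuity for the potential $G_{d,v}$ of $\mu$, so that the smoothing error is a power of $\epsilon$. It also requires controlling the diagonal self-energy term uniformly in $N$. I would first try to obtain the H\"older estimate from the Böttcher-coordinate expansion $G_{d,v}(c)=\log|c|+O(|c|^{-1})$ near $\infty$ together with the standard distortion argument. The non-archimedean places are trivial here because $\mu_{d,v}$ is a point mass at the Gauss point.
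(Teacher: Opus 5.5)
Your proposal is correct and follows essentially the Favre--Rivera-Letelier argument that the paper cites in place of giving its own proof. You regularize $\delta_{\mathcal{G}}$ on circles of radius $\epsilon$, bound the energy of $\delta^{\epsilon}_{\mathcal{G}}-\mu$ by $\log(1/\epsilon)/N+C\epsilon^{\kappa}$, and finish with Cauchy--Schwarz for the Dirichlet form after cutting $\varphi$ off near $\mathcal{M}_{d,v}$. The energy bound rests on three inputs:
\begin{enumerate}
\item $(\mu,\mu)=0$;
\item the H\"older continuity of $G_{d,v}$, which is needed only for the cross term $2\int G_{d,v}\,d\delta^{\epsilon}_{\mathcal{G}}$ (the off-diagonal smoothing only helps, so your ``differs by at most'' should be read one-sidedly);
\item the integrality of $\prod_{x\neq y}(x-y)$.
\end{enumerate}

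One caveat: with $\epsilon=N^{-1/\kappa}$ you actually obtain $C\bigl((1+\log N)/N\bigr)^{1/2}$, which is dominated by $(\log N/N)^{1/2}$ only for $N\ge 2$. This is a defect of the statement, not of your argument. For the orbit $\mathcal{G}=\{0\}$ the stated right-hand side vanishes, yet a bump $\varphi$ centred at $0\in\operatorname{int}\mathcal{M}_{d,v}$ makes the left-hand side equal to $1$. So the statement must be read for $|\mathcal{G}_n|\ge 2$, or with $\log(|\mathcal{G}_n|+1)$ in place of $\log|\mathcal{G}_n|$.
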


Next we derive a similar bound to that in \eqref{equid} for $\varphi(x)=\lambda_{\tau, v}(x)$, where $0 < \tau < 1$ and for $\alpha \in \overline{K}$ let $\lambda_{\tau, v}(x):=\log^+|x|_v+\log^+|\alpha|_v-\log \max\{\tau, |x-\alpha|_v\}$.
\begin{proposition} \label{quant}
Let $K$ be a number field, $f_{d,c}(z):=z^d+c$ be the unicritical family of degree $d\ge 2$ defined over $\overline{K}$ and let $v$ be a place of $M_K$. Let $\mu_{d,v}$ be a bifurcation measure of the family $f_{d,c}$. Then for all finite $\emph{Gal}(\overline{K}/K)$-invariant set $\mathcal{G}$ and for $\alpha \in \overline{K}$, there exists a constant $C_2 >0$ such that 
\begin{equation} \label{equant}
    \left| \frac{1}{|\mathcal{G}|} \sum_{x \in \mathcal{G}} \lambda_{\tau,v}(x) - \int \lambda_{\tau,v}(x) d\mu_{d,v} \right|_v \leq C_2  \left( \frac{\log|\mathcal{G}|}{|\mathcal{G}|} \right)^{1/2}\left(\log ^+|\alpha|_v+\dfrac{1}{\tau}\right).
\end{equation}    
\end{proposition}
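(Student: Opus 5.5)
The plan is to reduce \eqref{equant} to Proposition \ref{quantequi} by a smoothing argument. The function $\lambda_{\tau,v}$ is continuous but not $\mathcal{C}^1$, and it is not compactly supported. I would split it as
\[
\lambda_{\tau,v}(x)=\log^+|\alpha|_v+\log^+|x|_v-\log\max\{\tau,|x-\alpha|_v\}
\]
and treat the three pieces separately. The constant $\log^+|\alpha|_v$ contributes nothing, since both sides are averages against probability measures.

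For the remaining two pieces, note that every $x\in\mathcal{G}$ is post-critically finite, so $\mathcal{G}\subset\mathcal{M}_{d,v}$. By Lemma \ref{absolut} (archimedean case), or by $|x|_v\le 1$ (non-archimedean case), both $\mathcal{G}$ and $\operatorname{supp}\mu_{d,v}$ lie in a fixed disk $D$ of radius $R=2^{1/(d-1)}$. Only the values of $\lambda_{\tau,v}$ on $D$ therefore matter. I would replace $\lambda_{\tau,v}$ by $\psi=\chi\cdot\lambda_{\tau,v}$, where $\chi$ is a fixed smooth cutoff equal to $1$ on $D$ and supported in the disk of radius $2R$. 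This makes the function compactly supported without changing either side of \eqref{equant}.

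Next comes the regularity. In the non-archimedean case, $\mu_{d,v}=\delta_{\zeta(0,1)}$ and $\mathcal{G}$ lies in the closed unit disk, so I would check the estimate directly or argue via the Lipschitz property on the Berkovich tree. The main case is archimedean. There $\log^+|x|$ and $\log\max\{\tau,|x-\alpha|\}$ are Lipschitz, with Lipschitz constants $1$ and $1/\tau$ respectively. They fail to be $\mathcal{C}^1$ only along the circles $|x|=1$ and $|x-\alpha|=\tau$.

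I would mollify by convolving $\psi$ with a smooth bump $\rho_\epsilon$ of radius $\epsilon$, obtaining $\psi_\epsilon\in\mathcal{C}^1_c$. This gives
\[
\|\psi-\psi_\epsilon\|_\infty\le \epsilon\,\mathrm{Lip}(\psi),\qquad \|\psi_\epsilon'\|_\infty\le \mathrm{Lip}(\psi)\ll 1+\frac1\tau,
\]
and
\[
\|\psi_\epsilon\|_\infty\ll \log^+|\alpha|_v+\log^+ R+|\log\tau|+1.
\]
Since $|\log\tau|\le 1/\tau$ for $0<\tau<1$, the sup bound is also $\ll \log^+|\alpha|_v+1/\tau$. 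Moreover, $\log\max\{\tau,|x-\alpha|\}$ is bounded on $D$ by $\log(R+|\alpha|)\le \log^+|\alpha|+O(1)$. Applying Proposition \ref{quantequi} to $\psi_\epsilon$ and adding the two errors $\|\psi-\psi_\epsilon\|_\infty$ from $\mathcal{G}$ and from $\mu_{d,v}$ gives
\[
\Bigl|\tfrac{1}{|\mathcal{G}|}\textstyle\sum\lambda_{\tau,v}-\int\lambda_{\tau,v}\,d\mu_{d,v}\Bigr|\le C\Bigl(\tfrac{\log|\mathcal{G}|}{|\mathcal{G}|}\Bigr)^{1/2}\Bigl(\log^+|\alpha|_v+\tfrac1\tau\Bigr)+2\epsilon\Bigl(1+\tfrac1\tau\Bigr).
\]
Letting $\epsilon\to0$ yields \eqref{equant} with a constant $C_2$ depending only on $d$ and on $C$.

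The main obstacle is making sure the $\mathcal{C}^1$ norm of the mollified function depends on $\tau$ and $\alpha$ only through $\log^+|\alpha|_v+1/\tau$. This requires two things. First, the derivative bound must come from the Lipschitz constant, not from second-order behavior near the kink at $|x-\alpha|=\tau$. Second, the sup norm must not pick up a $\log|\alpha|$ term through the cutoff when $\alpha$ is far from $D$; in that regime the function is essentially constant on $D$, and the bound $\log(R+|\alpha|)$ handles it. I would also need to confirm that Proposition \ref{quantequi} applies to a single Galois-invariant set $\mathcal{G}$ of post-critically finite parameters, and not only to a sequence. This is implicit in its statement, since the constant is uniform in $n$.
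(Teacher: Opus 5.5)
Your proposal follows the paper's proof: you bound $|\lambda_{\tau,v}|$ and $|\lambda_{\tau,v}'|$ by a constant times $\log^+|\alpha|_v+1/\tau$, using that $\mathcal{G}$ and $\operatorname{supp}\mu_{d,v}$ lie in a fixed disk and that $|\log\tau|\le 1/\tau$, and then invoke Proposition~\ref{quantequi}. Your cutoff-and-mollification step is a legitimate refinement that the paper skips: the paper applies Proposition~\ref{quantequi} directly to the non-$\mathcal{C}^1$, non-compactly-supported $\lambda_{\tau,v}$, with sups taken only over $\mathcal{G}$. One small slip: $\mathrm{Lip}(\chi\lambda_{\tau,v})$ also picks up $\|\chi'\|_\infty\sup|\lambda_{\tau,v}|\ll \log^+|\alpha|_v+1/\tau$, but this is harmless for the final bound. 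As in the paper, your non-archimedean case is only formal, since Proposition~\ref{quantequi} as stated concerns the complex Mandelbrot set. A real argument there needs a Berkovich (Lipschitz-type) quantitative equidistribution input, which is your second option, rather than a ``direct check.''
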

\begin{proof}
    Let $x \in \mathcal{G}$ be a post-critically finite parameter, so by definition \eqref{Mandelbrot}, $x \in \mathcal{M}_{d, v}$. As $\mathcal{M}_{d, v}$ is compact, there exists $R >0$ such that $\mathcal{M}_{d, v} \subset \{x: |x|_v<R\}$. Since $|x-\alpha|_v \leq |x|_v+ |\alpha|_v \leq R+ |\alpha|_v$, it follows that $|\log^+|x|_v|_v \leq \log^+R$ and $$|\log \max\{\tau, |x-\alpha|_v\}|_v \leq  \max \{\log (R+|\alpha|_v), |\log \tau|_v\}.$$ So,
    \begin{equation*}
        |\lambda_{\tau,v}(x)|_v \leq \log ^+R+\log ^+|\alpha|_v+ \max \{\log (R+|\alpha|_v), |\log \tau|_v\}.
    \end{equation*}
    Consequently, there exists a constant $C_3>0 $, depending only on $R$, such that $$\sup_{x \in \mathcal{G}} |\lambda_{\tau,v}(x)|_v \leq C_3(1+\log^+|\alpha|_v+|\log \tau|_v).$$
    On the other hand, we have the derivative estimate $$\sup_{x \in \mathcal{G}} |\lambda_{\tau,v}'(x)|_v =\sup_{x \in \mathcal{G}} \left|\frac{d}{dx}\lambda_{\tau,v}(x)\right|_v <\dfrac{1}{\tau}+1.$$
    Therefore,
\begin{align*}
\sup_{x \in \mathcal{G}} \{ |\lambda_{\tau,v}(x)|_v, |\lambda_{\tau,v}'(x)|_v \}
&= \max \{ \sup_{x \in \mathcal{G}} |\lambda_{\tau,v}(x)|_v, \sup_{x \in \mathcal{G}} |\lambda_{\tau,v}'(x)|_v \} \\
&\leq C_3 \left( 1 + \log^+ |\alpha|_v + |\log \tau|_v \right) 
+ 1 + \frac{1}{\tau}.
\end{align*}
Since $0 < \tau < 1$, we have $|\log \tau|_v \leq \frac{1}{\tau}$. 
Thus, after enlarging the constant if necessary, we obtain
\begin{equation*}
\sup_{x \in \mathcal{G}} \{ |\lambda_{\tau,v}(x)|_v, |\lambda'_{\tau,v}(x)|_v \}
\leq C_4 \left(  \log^+ |\alpha|_v + \frac{1}{\tau} \right).
\end{equation*}
Then, by Proposition \ref{quantequi}, we will obtain \eqref{equant}.
\end{proof}
\begin{proposition}\label{propadelic}
Let $K$ be a number field, $f_{d,c}(z) := z^d+c$ be a post-critically finite map of degree $d \geq 2$ defined on $\mathbb{P}^1(\overline{K})$ and let $\mathcal{L}_{d,c}$ be a canonical adelic line bundle for $f_{d, c}$. Let $(x_n)_{n\geq0}$ be a sequence of distinct post-critically finite parameters in $\mathcal{M}_{d,v}, \alpha \in \overline{K}$ and for each $n$ let $\mathcal{G}_n$ be the set of $\mathrm{Gal}(\overline{K}/K)$ conjugates of $x_n$. Then for any adelic line bundle $\mathcal{L}$ there exists a constant $C_{5}>0$ such that
	$$\left| h_{\mathcal{L}}(\mathcal{G}_n) - \langle \mathcal{L}, \mathcal{L}_{d,c} \rangle \right|_v \leq C_{5}\left(\frac{\log |\mathcal{G}_n|}{|\mathcal{G}_n|}\right)^{1/2}\left(  \log^+ |\alpha|_v + \frac{1}{\tau} \right).$$
    \end{proposition}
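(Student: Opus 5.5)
The approach is to write both $h_{\mathcal{L}}(\mathcal{G}_n)$ and $\langle \mathcal{L}, \mathcal{L}_{d,c}\rangle$ as sums over places of local integrals, and to compare them place by place using Proposition \ref{quant}.

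\textbf{Decomposition.} Since the $x_n$ are post-critically finite parameters, the canonical height of the family vanishes on $\mathcal{G}_n$. Fix a section $s$ whose divisor avoids $\mathcal{G}_n$; for $\mathcal{L}=\mathcal{L}_\alpha$ this is $s(z)=z_1-\alpha z_2$. Then the height is
\[
h_{\mathcal{L}}(\mathcal{G}_n)=\sum_{v\in M_K}N_v\,\frac{1}{|\mathcal{G}_n|}\sum_{x\in\mathcal{G}_n}\log\|s(x)\|_v^{-1},
\]
while the definition of the global Arakelov--Zhang pairing gives
\[
\langle \mathcal{L},\mathcal{L}_{d,c}\rangle=\sum_v N_v\Big(-\int\log\|s\|_v\,d\mu_{d,v}\Big)
\]
plus the term involving $\mathrm{div}(s)=\alpha$. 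That term is a height of $\alpha$ relative to the family and is handled by Lemma \ref{lem:locht}, which identifies $\int\log|x-\alpha|_v\,d\mu_{d,v}$ with $\lambda_{d,\alpha,v}(\alpha)$. Subtracting, the difference becomes
\[
\sum_v N_v\Big(\frac{1}{|\mathcal{G}_n|}\sum_{x}\phi_v(x)-\int\phi_v\,d\mu_{d,v}\Big),
\qquad \phi_v(x)=\log^+|x|_v+\log^+|\alpha|_v-\log|x-\alpha|_v .
\]

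\textbf{Truncation.} Since $\phi_v$ has a logarithmic singularity at $\alpha$, I replace it by the truncated function $\lambda_{\tau,v}$ of Proposition \ref{quant}.
\begin{enumerate}
\item At places where $\mathcal{L}$ agrees with the standard metric and $|\alpha|_v\le1$, the post-critically finite parameters are algebraic integers, so $|x|_v\le1$, and $\mu_{d,v}=\delta_{\zeta(0,1)}$. Both sides then agree exactly as long as $|x-\alpha|_v=1$. The error at these places is a contribution to $h(\alpha)$-type terms that I absorb into the constant; this is the weakest point of the plan.
\item At the finitely many remaining places, I apply Proposition \ref{quant} to $\lambda_{\tau,v}$. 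This gives an error of
\[
C_2\left(\frac{\log|\mathcal{G}_n|}{|\mathcal{G}_n|}\right)^{1/2}\Big(\log^+|\alpha|_v+\frac1\tau\Big).
\]
\end{enumerate}

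\textbf{Cost of truncation.} The integral side changes by $\int\log^+(\tau/|x-\alpha|_v)\,d\mu_{d,v}$. At archimedean places I control this using hypothesis (b): $\alpha\notin\partial\mathcal{M}_{d,v}=\mathrm{supp}\,\mu_{d,v}$, so for $\tau$ smaller than the distance from $\alpha$ to the support this term is $0$. At non-archimedean places it is $0$ for $\tau<1$, since the Gauss point has $|x-\alpha|_v\ge1$ or $=|\alpha|_v$. The sum side changes by $\frac1{|\mathcal{G}_n|}\sum_x\log^+(\tau/|x-\alpha|_v)$. This is nonnegative, so it only affects one direction of the inequality. I would handle it either by choosing $\tau$ so that no conjugate lies within $\tau$ of $\alpha$, or by bounding it via a Liouville-type estimate through the product formula applied to $\prod_x(x-\alpha)$.

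\textbf{Assembly and main obstacle.} Summing with weights $N_v$ over the finitely many relevant places, with $\sum N_v\le1$, and taking $C_5$ to absorb $C_2$ and the number of places gives the stated bound. I expect the main obstacle to be controlling the near-$\alpha$ contribution on the point side uniformly in $n$. The plan is to fix $\tau$ depending only on $\alpha$ and $v$, namely the distance from $\alpha$ to $\partial\mathcal{M}_{d,v}$, and to use the equidistribution estimate itself, by applying Proposition \ref{quantequi} to a bump function near $\alpha$, to show that few conjugates are that close. Their contribution is then bounded by $O\big((\log|\mathcal{G}_n|/|\mathcal{G}_n|)^{1/2}\,\tau^{-1}\big)$.
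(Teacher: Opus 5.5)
There is a genuine gap. It surfaces in your step (1), the point you flagged, but it originates in the decomposition. For $\alpha\in K$ (as in the definition of $\mathcal{L}_\alpha$), the paper's formula for the global pairing gives $\langle\mathcal{L},\mathcal{L}_{d,c}\rangle=\sum_v N_v\int\phi_v\,d\mu_{d,v}+\hat h(\alpha)$. Here $\hat h(\alpha):=\sum_v N_v\lambda_{d,\alpha,v}(\alpha)$ is the canonical height of $\alpha$ for $f_{d,\alpha}$, which is positive when $f_{d,\alpha}$ is not post-critically finite (the case of interest). Lemma~\ref{lem:locht} only rewrites $\hat h(\alpha)$ as $\sum_v N_v\int\log|x-\alpha|_v\,d\mu_{d,v}$; nothing on the point side cancels it. The true identity is therefore
\[
h_{\mathcal{L}}(\mathcal{G}_n)-\langle\mathcal{L},\mathcal{L}_{d,c}\rangle=Q_n-\hat h(\alpha),\qquad Q_n:=\sum_v N_v\Big(\frac{1}{|\mathcal{G}_n|}\sum_{x\in\mathcal{G}_n}\phi_v(x)-\int\phi_v\,d\mu_{d,v}\Big).
\]
Moreover $Q_n$, the quantity you set out to bound, does not tend to $0$. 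The product formula for $P_n(\alpha)=\prod_{x\in\mathcal{G}_n}(\alpha-x)\in K^\times$ gives $Q_n=h(x_n)-\sum_{v\mid\infty}N_v\int\log^+|x|_v\,d\mu_{d,v}+\hat h(\alpha)$, which tends to $\hat h(\alpha)$ by equidistribution. Your step (2) is designed to make the finitely many remaining places contribute $o(1)$, so all of this non-decaying mass must sit in step (1). At a prime with $|\alpha|_v\le 1$ the local discrepancy is $|\mathcal{G}_n|^{-1}\log|P_n(\alpha)|_v^{-1}$. It is nonzero exactly at the primes dividing $P_n(\alpha)$, which move with $n$, and the total over them tends to $\hat h(\alpha)$. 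That cannot be absorbed into $C_5$, because the right-hand side of the claimed inequality tends to $0$.

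The paper never uses the pairing formula at this point. It identifies $\langle\mathcal{L},\mathcal{L}_{d,c}\rangle$ with $\lim h_{\mathcal{L}}(\mathcal{G}_n)$ via Theorem~\ref{lemmapairing}. It then uses $S$-integrality of the $x_n$ relative to $\alpha$ (invoked in the proof, though absent from the displayed statement) to force $\lambda_{\alpha,v}(x)=0$ for all $v\notin S$. Only the finite set $S$ then remains for equidistribution and Proposition~\ref{quant}. Your proposal has no substitute for that input. Importing it would not rescue your decomposition anyway: the identity $h_{\mathcal{L}}(\mathcal{G}_n)-\langle\mathcal{L},\mathcal{L}_{d,c}\rangle=Q_n$ conflicts with the pairing formula whenever $\hat h(\alpha)\neq 0$, integrality or not.

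Within your own framework, the missing idea is the product formula. The relation $\sum_v N_v\sum_{x\in\mathcal{G}_n}\log|x-\alpha|_v=0$ cancels the singular parts against $\sum_v N_v\int\log|x-\alpha|_v\,d\mu_{d,v}$. The difference then becomes $\sum_{v\mid\infty}N_v\big(|\mathcal{G}_n|^{-1}\sum_{x}\log^+|x|_v-\int\log^+|x|_v\,d\mu_{d,v}\big)$. The non-archimedean terms vanish because post-critically finite parameters are algebraic integers and $\mu_{d,v}=\delta_{\zeta(0,1)}$. What remains is a discrepancy for a Lipschitz, non-singular function. Proposition~\ref{quantequi}, after a routine smoothing, bounds it by $O\big((\log|\mathcal{G}_n|/|\mathcal{G}_n|)^{1/2}\big)$. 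This implies the stated bound, since $\log^+|\alpha|_v+1/\tau\ge 1$. No truncation at $\tau$, no hypothesis (b) and no integrality are needed.
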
 
    \begin{proof}
Suppose that $(x_n)_{n \geq 0}$ is a sequence of post-critically finite parameters in $\mathcal{M}_{d,v}$ and $\mathcal{G}_n$ is the $\mathrm{Gal}(\overline{K}/K)$-orbit of $x_n$. The height $(x_n)_{n \geq 0}$ associated to the adelic line bundle $\mathcal{L}$ is given by
\begin{equation}\label{prop2.3eq2}
h_{\mathcal{L}}(x_n) = \frac{1}{|\mathcal{G}_n|}\sum_{x\in \mathcal{G}_n} h_{\mathcal{L}}(x)=\frac{1}{|\mathcal{G}_n|}\sum_{x\in \mathcal{G}_n} 
\sum_{v \in M_K} N_v \log \| s(x) \|_{\mathrm{st},v}^{-1},
\end{equation} 
 where the section $s(x)= x-\alpha$. Since $$h_{d,c}(x_n) = \frac{1}{|\mathcal{G}_n|}\sum_{x\in \mathcal{G}_n}h_{d,c}(x) =\frac{1}{|\mathcal{G}_n|}\sum_{x\in \mathcal{G}_n}\lim_{m \to \infty} \dfrac{h(f_{d,c}^m(x))}{d^m}$$ and for each place $v \in \mathcal{M}_K$, the local Green's function 
$G_{d,v}(x_n)$ vanishes if and only if $x_n$ belongs to the 
$v$-adic Mandelbrot set $\mathcal{M}_{d,v}$, so by \eqref{green}, it follows that $h_{d,c}(x_n) \to 0$. Then by Theorem \ref{lemmapairing}, we have
\begin{equation}\label{prop2.3eq1}
\langle \mathcal{L}, \mathcal{L}_{d,c} \rangle = \lim_{n \to \infty} h_{L}(x_n).
\end{equation} 
For each place $v \in M_K$, we have the local height function 
$\lambda_{\alpha,v}(x) = \log \| s(x) \|_{\mathrm{st},v}^{-1}$.
Then from \eqref{prop2.3eq2} and \eqref{prop2.3eq1},
\begin{equation*}
\langle \mathcal{L}, \mathcal{L}_{d,c} \rangle = \lim_{n \to \infty} \frac{1}{|\mathcal{G}_n|} \sum_{x \in \mathcal{G}_n} \sum_{v \in M_K} N_v \lambda_{\alpha,v}(x).
\end{equation*} 
Let $S$ be a finite set of places including all of the archimedean places. For any place $v \notin S$, the standard metric satisfies $\|s(x)\|_{st,v}^{-1}=1$. Since $x \in \mathcal{M}_{d,v}$ is a post-critically finite parameter, it follows that $|x|_v \leq 1$ and using $S$-integrality, we have the local height function $\lambda_{\alpha,v}(x)$ vanishes identically for all places $v \notin S$,
the sum over $M_K$ reduces to a sum over the finite set $S$, yielding
\begin{equation*}
\langle \mathcal{L}, \mathcal{L}_{d,c} \rangle=\lim_{n \to \infty} \frac{1}{|\mathcal{G}_n|} \sum_{x \in \mathcal{G}_n}
\sum_{v \in S} N_v \lambda_{\alpha,v}(x).
\end{equation*} 
Because the set $S$ is finite and independent of $n$, we can interchange the limit with the sum over $v\in S$. By Theorem \ref{equistribution}, the Galois orbits of $(x_n)_{n \geq 1}$ are equidistributed with respect to the bifurcation measure $\mu_{d,v}$, so
\begin{equation}\label{eqprop2.3}
\langle \mathcal{L}, \mathcal{L}_{d,c} \rangle = \sum_{v \in S} N_v \int \lambda_{\alpha,v}(x) \, d\mu_{d,v}.
\end{equation}
For any $\mathrm{Gal}(\overline{K}/K)$-invariant set $\mathcal{G}_n$, we have $h_{\mathcal{L}}(\mathcal{G}_n)= h_{\mathcal{L}}(x_n)$. Then from \eqref{prop2.3eq2} and \eqref{eqprop2.3}, we get
    \begin{align*}
       \left| h_{\mathcal{L}}(\mathcal{G}_n) - \langle \mathcal{L}, \mathcal{L}_{d,c} \rangle \right|_v &= \sum_{v \in M_K} N_v \left| \frac{1}{|\mathcal{G}_n|} \sum_{x \in \mathcal{G}_n} \lambda_{\alpha,v}(x) - \int \lambda_{\alpha,v}(x) \, d\mu_{d,v} \right|_v \\
      & \leq \sum_{v \in M_K} N_v \left| \frac{1}{|\mathcal{G}_n|} \sum_{x \in \mathcal{G}_n} \lambda_{\tau,v}(x) - \int \lambda_{\tau,v}(x) \, d\mu_{d,v} \right|_v. 
    \end{align*}
By Proposition \ref{quant}, we get
    \begin{align}\label{eq4.2}
        \left| h_{\mathcal{L}}(\mathcal{G}_n) - \langle \mathcal{L}, \mathcal{L}_{d,c} \rangle \right|_v
        & \leq \sum_{v \in M_K} N_v C_{2}  \left( \frac{\log|\mathcal{G}|}{|\mathcal{G}|} \right)^{1/2}\left(\log ^+|\alpha|_v+\dfrac{1}{\tau}\right) \nonumber\\ 
        & \leq C_{5}\left(\frac{\log |\mathcal{G}_n|}{|\mathcal{G}_n|}\right)^{1/2}\left(  \log^+ |\alpha|_v + \frac{1}{\tau} \right),
    \end{align}
  for some suitable constant $C_{5} > 0$.  This completes the proof of Proposition \ref{propadelic}.
\end{proof}	
\subsection{Linear forms in logarithms}
We also require the theory of linear forms in logarithms, initially developed by Baker \cite{alan}.  This theory provides lower bounds for expressions of the form
\begin{equation*}
	\left| a_1^{b_1}\cdots a_n^{b_n}-1\right|
\end{equation*}
in terms of the heights of $a_i$ and $b_i$. The following version is given by B\'erczes,  Evertse and Gy\"ory \cite{Berczes}.
\begin{theorem}\label{linearthm}
Let $\alpha_1,\ldots,\alpha_n$ be $n\geq 2$ nonzero elements of $K$ and $b_1,\ldots,b_n \in \mathbb{Z}$, not all zero. Define
\begin{equation*}
\Lambda := \alpha_1^{b_1}\cdots \alpha_n^{b_n} - 1,
\end{equation*}
\begin{equation*}
\Theta := \prod_{i=1}^n \max\left\{ h(\alpha_i), \frac{2}{[K:\mathbb{Q}] \bigl(\log(3[K:\mathbb{Q}])\bigr)^3 }\right\},
\end{equation*}
and
\begin{equation*}
B := \max\left\{3, |b_1|,\ldots,|b_n| \right\}.
\end{equation*}
Let $v$ be a place of $K$ and write
\begin{equation*}
N(v) :=
\begin{cases}
2, & \text{if $v$ is infinite}, \\
N_{K/\mathbb{Q}}(\mathfrak{p}), & \text{if $v=\mathfrak{p}$ is finite.}
\end{cases}
\end{equation*}
Suppose that $\Lambda \neq 0$. Then for $v \in M_K$, we have
\begin{equation*}
\log |\Lambda|_v > -c_1(n,[K:\mathbb{Q}]) \dfrac{N(v)}{\log N(v)}  \Theta \log B,
\end{equation*}
where
\begin{equation*}
c_1(n,d) = 12d(16ed)^{3n+2}\max(1,\log d)^2,
\end{equation*}
with an extra factor of $d$ arising from the chosen normalization of the absolute values.
\end{theorem}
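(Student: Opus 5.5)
The statement just given is Theorem \ref{linearthm}, a lower bound for linear forms in logarithms quoted from B\'erczes--Evertse--Gy\"ory. We do not reprove it. The plan is to reduce it to the explicit archimedean and $p$-adic estimates of Matveev and Yu, and to track how the constants change under the paper's normalization of absolute values.

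\emph{Step 1 (archimedean places).} When $v$ is infinite, $\Lambda\neq 0$ lets us write $\Lambda=e^{\Lambda'}-1$ with $\Lambda'=\sum b_i\log\alpha_i+2\pi i b_0$. Here we use the principal branches, and $|b_0|\le nB$. If $|\Lambda|_v\ge 1/2$ in the unnormalized absolute value, there is nothing to prove. Otherwise $|\Lambda'|\le 2|\Lambda|$, and Matveev's theorem applies to the $n+1$ logarithms. It gives $\log|\Lambda'|>-C(n,D)\prod_i\max\{Dh(\alpha_i),|\log\alpha_i|,0.16\}\log(eB)$ with $D=[K:\Q]$. We then replace $\max\{Dh(\alpha_i),|\log\alpha_i|\}$ by $D\max\{h(\alpha_i),\ldots\}$, absorbing the factor $\pi$ and $|\log|\alpha_i||\le Dh(\alpha_i)$ into the constant. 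The lower cutoff $2/(D(\log 3D)^3)$ dominates $0.16/D$ up to a power of $\log D$. This gives the bound with $N(v)/\log N(v)=2/\log 2$.

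\emph{Step 2 (finite places).} When $v=\mathfrak p$ is finite, we apply Yu's $p$-adic theorem directly to $\Lambda$. It yields $\mathrm{ord}_{\mathfrak p}(\Lambda)\le C'(n,D)\,\frac{N(\mathfrak p)}{(\log N(\mathfrak p))^2}\prod_i\max\{h(\alpha_i),\ldots\}\log B$. Converting with the paper's normalization $\log|\Lambda|_v=-\frac{\log N(\mathfrak p)}{D}\,\mathrm{ord}_{\mathfrak p}\Lambda$ removes one factor of $\log N(\mathfrak p)$. This produces exactly the shape $N(v)/\log N(v)$.

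\emph{Step 3 (constants).} We compare the explicit constants of Matveev and Yu with $c_1(n,d)=12d(16ed)^{3n+2}\max(1,\log d)^2$. The extra factor $d$ comes from normalizing $|\cdot|_v$ by the $1/[K:\Q]$ power, as the statement indicates.

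The main obstacle is Step 3. Uniformly dominating both explicit constants, including the $b_0$ term in the archimedean case and the small-height cutoff $\Theta$, by the single stated $c_1$ is delicate bookkeeping. For this we would follow the derivation in \cite{Berczes} rather than redo it.
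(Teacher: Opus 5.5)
Your outline agrees with the paper, which gives no argument of its own and simply quotes this bound from \cite{Berczes}; there it is derived as you describe, from Matveev's archimedean estimate and Yu's $\mathfrak p$-adic estimate, with the constants absorbed into $c_1$. One correction to Step~3: normalizing by the $1/[K:\mathbb{Q}]$ power only shrinks $\bigl|\log|\Lambda|_v\bigr|$ (as your own Step~2 shows), so the normalization cannot be what forces the extra factor $d$ in $c_1$, and that factor is merely harmless slack.
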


\section{Proof of Main Results}\label{sec-proof}
The following proposition provides a bound on the $v$-adic logarithmic distance between a non post-critically finite parameter $\alpha$ and a post-critically finite parameter $x$ that can be determined in terms of the height of $\alpha$, the degree of $f_{d, c}$ and the size of the Galois orbit $\mathcal{G}$. 
\begin{proposition}\label{prop3.1}
	Let $K$ be a number field and $f_{d,c}(z):=z^d+c$ be the unicritical family of degree $d\ge 2$ defined over $\overline{K}$. Let $\mathcal{G}$ be any $\emph{Gal}(\overline{K}/K)$-orbit of some post-critically finite parameter, let $v$ be an archimedean place of $K$ and $\alpha \not \in \partial \mathcal{M}_{d, v} $ be a non post-critically finite parameter. Then for any $\epsilon > 0$, there exists a constant $C_6>0$ such that  
	$$ \max_{x \in \mathcal{G}} \log |x-\alpha|_v^{-1} \leq C_6  \left(h(\alpha)+ \frac{d}{d-1}\right)|\mathcal{G}|^{8+\epsilon}.$$ 
\end{proposition}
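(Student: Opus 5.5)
Fix $x \in \mathcal{G}$ and set $\Lambda := x-\alpha$. The plan is to bound $\log|x-\alpha|_v^{-1}$ from above by a Liouville/Baker-type lower bound for $|x-\alpha|_v$, and then to control the heights and degrees that appear purely in terms of $h(\alpha)$, $d$ and $|\mathcal{G}|$. We only need an upper bound on $\log|\Lambda|_v^{-1}$, and $\Lambda \neq 0$ because $x$ is post-critically finite while $\alpha$ is not.

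\textbf{Step 1: rewrite $\Lambda$ as a linear form.} Work in the number field $F := K(x,\alpha)$. Since $x$ has at most $|\mathcal{G}|$ conjugates over $K$, we have $[F:\mathbb{Q}] \le [K(\alpha):\mathbb{Q}]\,|\mathcal{G}|$. If $|\alpha|_v$ is large, then $|x-\alpha|_v \ge |\alpha|_v - 2^{1/(d-1)}$ by Lemma~\ref{absolut}, and there is nothing to prove. Otherwise write $\Lambda = -\alpha\,(x\alpha^{-1}-1)$. This puts Theorem~\ref{linearthm} in reach with $n=2$, $\alpha_1 = x$, $\alpha_2 = \alpha^{-1}$ and $b_1=b_2=1$, so that $B=3$. (If $\alpha = 0$, use $|x|_v$ directly; $0$ is post-critically finite, so this case is excluded anyway.)

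\textbf{Step 2: control the heights.} Since $f_{d,x}$ is post-critically finite, $x$ is an algebraic integer all of whose archimedean conjugates satisfy $|x| \le 2^{1/(d-1)}$ by Lemma~\ref{absolut}. Hence
\[
h(x) \le \frac{\log 2}{d-1} \le \frac{d}{d-1},
\]
and also $h(\alpha^{-1}) = h(\alpha)$. Theorem~\ref{linearthm} then gives
\[
\log|\Lambda|_v^{-1} \le c_1\bigl(2,[F:\mathbb{Q}]\bigr)\,\frac{2}{\log 2}\,\Theta\log 3 + \log^+|\alpha|_v,
\]
where
\[
\Theta \le \max\Bigl\{h(\alpha),\tfrac{2}{[F:\mathbb{Q}](\log 3[F:\mathbb{Q}])^3}\Bigr\}\cdot\max\Bigl\{\tfrac{d}{d-1},\cdots\Bigr\}.
\]
The $\Theta$ factor can be bounded by a constant multiple of $\bigl(h(\alpha)+\tfrac{d}{d-1}\bigr)$, with the constant absorbed into $C_6$. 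The leftover $\log^+|\alpha|_v$ is at most $[K(\alpha):\mathbb{Q}]\,h(\alpha)$.

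\textbf{Step 3: the main obstacle, the dependence on $|\mathcal{G}|$.} The constant $c_1(2,D)$ grows like $D^{3\cdot 2+2}\cdot D\,(\log D)^2 = D^{9}(\log D)^2$, where $D = [F:\mathbb{Q}] \le [K(\alpha):\mathbb{Q}]\,|\mathcal{G}|$. The stated exponent $8+\epsilon$ therefore needs care. I would first check whether the extra factor of $d$ coming from the normalization is cancelled by the normalized absolute value $|\cdot|_v = |\cdot|^{n_v/[F:\mathbb{Q}]}$: converting the normalized bound back to the usual archimedean absolute value multiplies by $[F:\mathbb{Q}]/n_v$, while the bound for the normalized absolute value carries one fewer power of $D$. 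I expect the bookkeeping to land on $D^{8}(\log D)^2$, and the inequality $(\log D)^2 \ll_\epsilon D^{\epsilon}$ then gives $|\mathcal{G}|^{8+\epsilon}$. All dependence on $K$, $[K(\alpha):\mathbb{Q}]$ and $\epsilon$ goes into $C_6$.

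If the exponent count instead comes out as $9$, I would use the hypothesis $\alpha\notin\partial\mathcal{M}_{d,v}$ to save a power. If $\alpha$ lies outside $\mathcal{M}_{d,v}$, then $|x-\alpha|_v \ge \mathrm{dist}(\alpha,\mathcal{M}_{d,v}) > 0$, which is a constant bound. If $\alpha$ lies in the interior, the distance from $\alpha$ to $\partial\mathcal{M}_{d,v}$ is positive, but periodic parameters can still approach $\alpha$ inside a hyperbolic component, so I would treat that case with the linear-forms estimate above. Finally, take the maximum over $x\in\mathcal{G}$; every bound above is uniform in $x$.
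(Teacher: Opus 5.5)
Your Step 3 is where the argument breaks: you never actually obtain the exponent $8+\epsilon$.

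With Theorem~\ref{linearthm} as stated (extra factor $D=[F:\Q]$ included), $c_1(2,D)\asymp D^{9}(\log D)^2$. The factor $\Theta$ supplies no compensating $1/D$, because $\max\{h(x),\frac{2}{D(\log 3D)^3}\}=h(x)$ is typically of size $\asymp 1$. For example, $h(-2)=\log 2$ for $d=2$, and by equidistribution $h(x)$ stays bounded away from $0$ as $|\mathcal{G}|\to\infty$.

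The normalization bookkeeping does not rescue this for the absolute value in the statement. That is the usual one on $\mathbb{C}_v\cong\mathbb{C}$, or a fixed $K$-normalized power of it, which only changes $C_6$. The B\'erczes--Evertse--Gy\"ory bound for $|\cdot|^{n_w/[F:\Q]}$ carries $D^{8}(\log D)^2$. Passing back to $|\cdot|$ multiplies by $[F:\Q]/n_w$, so you land on $D^{9}(\log D)^2$, i.e.\ $|\mathcal{G}|^{9+\epsilon}$. Exponent $8$ is correct only for the $x$-dependent normalization $|\cdot|^{n_w/[F:\Q]}$, and there the statement is trivial.

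Your fallback does not close the gap either. It sends the interior case back to the same estimate, and the reason you give is false. A hyperbolic component contains exactly one post-critically finite parameter (its center), other interior components contain none, and Misiurewicz parameters lie on $\partial\mathcal{M}_{d,v}$. So a non-PCF $\alpha\notin\partial\mathcal{M}_{d,v}$ has a disk $D(\alpha,r_\alpha)$ free of PCF parameters, which is exactly the first paragraph of the paper's proof. That gives $\log|x-\alpha|_v^{-1}\le\log(1/r_\alpha)$, but with a constant depending on $\alpha$ itself rather than only on $h(\alpha)$ and its degree.

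The paper's own linear-forms computation reaches $8$ only by replacing $\max\{h(x),\frac{2}{D'(\log 3D')^3}\}$ with its second entry. That step is unjustified for the reason above, so matching that bookkeeping is not a valid target.

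\textbf{The missing idea.} Baker's theory is not needed here: Liouville's inequality already gives a much stronger bound. Let $\beta=x-\alpha\in F^{*}$ and let $w$ be the place of $F$ induced by the embedding at $v$. The product formula gives $\log|\beta|_w^{-1}\le\frac{[F:\Q]}{n_w}h(\beta)$, and $h(x-\alpha)\le h(x)+h(\alpha)+\log 2$. Combining these with your own bounds $[F:\Q]\le[K(\alpha):\Q]\,|\mathcal{G}|$ and $h(x)\le\frac{\log 2}{d-1}$ yields
\[
\log|x-\alpha|_v^{-1}\le[K(\alpha):\Q]\,|\mathcal{G}|\Bigl(h(\alpha)+\frac{d}{d-1}\log 2\Bigr)\le[K(\alpha):\Q]\Bigl(h(\alpha)+\frac{d}{d-1}\Bigr)|\mathcal{G}|.
\]
This is the proposition with exponent $1$ in place of $8+\epsilon$ and $C_6=[K(\alpha):\Q]$, uniformly over $\alpha$ of bounded degree.

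A smaller slip: from $x-\alpha=\alpha(x\alpha^{-1}-1)$ the leftover term is $\log|\alpha|_v^{-1}\le\log^+|\alpha^{-1}|_v$, not $\log^+|\alpha|_v$. This is harmless since $h(\alpha^{-1})=h(\alpha)$.
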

\begin{proof}
Since $v$ is an archimedean place and $\alpha \not \in \partial \mathcal{M}_{d, v}$, there exists a real number  $r>0$ such that the open disk
$D(\alpha,r):=\{y\in \mathbb{C}_v : |y-\alpha|_v<r\}$
does not intersect the closed set $\partial \mathcal{M}_{d,v}$. 
If $\alpha \notin \mathcal{M}_{d,v}$, then the map $f_{d,\gamma}$ cannot be post-critically finite for any 
$\gamma \in D(\alpha,r)$. On the other hand, if $\alpha \in \mathcal{M}_{d,v}$, then there exists a neighborhood of $\alpha$ that contains no post-critically finite parameter. Consequently, after possibly replacing $r$ by a smaller positive number, we may assume that $f_{d,\gamma}$ is not post-critically finite for any $\gamma \in D(\alpha,r)$. Thus, for any post-critically finite parameter $x$, we have $|x-\alpha|_v>r$ for some $r>0$.

For every post-critically finite parameter $x \in \mathcal{G}$, we have $|\alpha|_v - |x|_v \le |x-\alpha|_v.$
First, suppose that $|x|_v \neq |\alpha|_v$. Then
\begin{equation}\label{peq1}
\log \left| |\alpha|_v - |x|_v \right|_v \le \log |x-\alpha|_v,
\end{equation}
and hence by Lemma \ref{absolut}, we get
\begin{align*}
\log |x-\alpha|_v^{-1} &\leq \log \left| |\alpha|_v - |x|_v \right|_v^{-1} \leq h( |\alpha|_v - |x|_v)
\\& \leq h(|\alpha|_v)+h(|x|_v)+\log 2 \\&\le h(|\alpha|_v)+h\bigl(2^{1/(d-1)}\bigr)+\log 2.
\end{align*} 
Since
\begin{align*}
h\bigl(2^{1/(d-1)}\bigr) &= \log\max\left\{ \left|2\right|_v^{1/(d-1)},1\right\} = \frac{1}{d-1}\log 2
\end{align*} 
and $h(|\alpha|_v)\le 2h(\alpha)$, then
\begin{align*}
\log |x-\alpha|_v^{-1}
&\le 2h(\alpha)+\frac{1}{d-1}\log 2+\log 2 = 2h(\alpha)+\left(\frac{d}{d-1}\right)\log 2.
\end{align*} 
Thus,
\begin{align}\label{peq2}
\log |x-\alpha|_v^{-1}
&\le [\Q(\alpha):\Q]\left(h(\alpha)+\frac{d}{d-1}\right).
\end{align}
Next, if $|x|_v = |\alpha|_v$, then $\log \left| |\alpha|_v - |x|_v \right|_v$ is not a continuous function. So, to find the upper bound for $\log |x-\alpha|_v^{-1}$, we will employ linear forms in logarithms. Observe that $\log |x-\alpha|_v= \log |\alpha|_v+ \log |x \alpha^{-1}-1|_v$. We will apply Theorem \ref{linearthm} with $\alpha_1=x, \alpha_2=\alpha, b_1=1, b_2=-1$ and set $D'=[\Q(\alpha, x):\Q]$.
\begin{align} \label{eqlinear}
    \begin{split}
        \log |x \alpha^{-1}-1|_v &>-C(2, D')\frac{2\log 3}{\log 2} \max \left(h(x), \frac{2}{D'(\log(3D')^3)}\right)\max \left(h(\alpha), \frac{2}{D'(\log(3D'))^3}\right)\\
        & \geq -12 D' (16eD')^8 (\log D')^2 \frac{2}{D'(\log(3D'))^3} h(\alpha) \log 3\\
        & \geq -24 \log 3 \cdot (16e)^8 D'^8 \frac{1}{\log D'} h(\alpha).
    \end{split}
\end{align}
 Using \cite[Proposition 1.21]{morandi}, we can write $[\Q(\alpha, x):\Q] \leq [\Q(\alpha):\Q][\Q(x):\Q]$. As $|\mathcal{G}|=[\Q(x):\Q]$, we can write \eqref{eqlinear} as 
\begin{align}
    \begin{split}
         \log |x \alpha^{-1}-1|_v^{-1} &\leq 24 (16e)^8 [\Q(\alpha):\Q]^8 \frac{1}{\log D'} |\mathcal{G}|^8 h(\alpha) \\
         & \leq 24 (16e)^8 [\Q(\alpha):\Q]^{8+\epsilon} |\mathcal{G}|^{8+\epsilon} h(\alpha)
    \end{split}
\end{align}
The last inequality is true because of $\frac{1}{\log D'} \leq D'^\epsilon$ for some $\epsilon >0$.
So, 
\begin{align}
    \begin{split}
        \log |x-\alpha|_v^{-1} &=\log |\alpha|_v^{-1}+\log |x\alpha^{-1}-1|_v^{-1}\\
        & \leq \log |\alpha|_v^{-1}+ 24 (16e)^8 [\Q(\alpha):\Q]^{8+\epsilon} |\mathcal{G}|^{8+\epsilon} h(\alpha) \\
        & \leq 24 (16e)^8 [\Q(\alpha):\Q]^{8+\epsilon} |\mathcal{G}|^{8+\epsilon} (h(\alpha)+1).
    \end{split}
\end{align}
Hence, for a suitable constant $C_6 >0$, we have
\begin{equation*}
    \max_{x \in \mathcal{G}} \log |x-\alpha|_v^{-1} \leq C_6  \left(h(\alpha)+ \frac{d}{d-1}\right)|\mathcal{G}|^{8+\epsilon}.
\end{equation*}
\end{proof}
\begin{proposition}\label{archprop}
    Let $f_{d,c}(z):=z^d+c$ be the unicritical family of degree $d\ge 2$ defined over $\overline{K}$. Let $\mathcal{G}$ be any $\emph{Gal}(\overline{K}/K)$-orbit of some post-critically finite parameter, let $v$ is an archimedean place of $K$ and for any $\alpha \in \mathbb{P}^{1}(\overline{K})$, there do not exist two distinct post-critically finite parameters $x_i \in \mathbb{\overline{Q}},i=1,2$,  for $f_{d,c}$ such that $$\log|x_i-\alpha|_v^{-1} > \log M$$ for some $M>1$.
\end{proposition}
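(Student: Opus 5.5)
The plan is to read the statement as follows: for $\alpha$ as in Proposition~\ref{prop3.1} (so $\tau(\alpha)\notin\partial\mathcal{M}_{d,v}$ at the archimedean place $v$), there is $M>1$, depending only on $\alpha$, $v$ and $d$, such that at most one post-critically finite parameter $x$ satisfies $\log|x-\alpha|_v^{-1}>\log M$. In fact I expect to show there is none. The argument reuses the first step of the proof of Proposition~\ref{prop3.1}.

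\emph{Step 1: a PCF-free disc around $\alpha$.} Since $\alpha\notin\partial\mathcal{M}_{d,v}$ and $\partial\mathcal{M}_{d,v}$ is closed, there is $r>0$ with $D(\alpha,r)\cap\partial\mathcal{M}_{d,v}=\emptyset$. There are two cases.
\begin{itemize}
\item If $\alpha\notin\mathcal{M}_{d,v}$, shrink $r$ so that $D(\alpha,r)\cap\mathcal{M}_{d,v}=\emptyset$. Every post-critically finite parameter lies in $\mathcal{M}_{d,v}$, so none lies in $D(\alpha,r)$.
\item If $\alpha$ lies in the interior of $\mathcal{M}_{d,v}$, then $D(\alpha,r)$ is contained in a hyperbolic component, or in a non-hyperbolic interior component. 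I would invoke the same fact used in Proposition~\ref{prop3.1}: a neighbourhood of a non-PCF interior parameter contains no PCF parameter. Concretely, in a hyperbolic component the only PCF parameter is the center, and $\alpha$ is not that center. After shrinking $r$, no post-critically finite parameter lies in $D(\alpha,r)$.
\end{itemize}

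\emph{Step 2: choice of $M$.} Put $M:=\max\{2,1/r\}$. Then $\log|x-\alpha|_v^{-1}>\log M$ means $|x-\alpha|_v<1/M\le r$, which Step 1 forbids for every post-critically finite $x$. So no such $x$ exists, and in particular there cannot be two distinct ones $x_1\ne x_2$.

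\emph{Fallback if $\alpha$ may lie on $\partial\mathcal{M}_{d,v}$.} The statement allows arbitrary $\alpha\in\PP^1(\overline{K})$, and Step 1 fails on the boundary. In that case I would argue by separation instead.
\begin{itemize}
\item Suppose $|x_i-\alpha|_v<1/M$ for $i=1,2$. Then $|x_1-x_2|_v<2/M$.
\item Post-critically finite parameters are algebraic integers, so $x_1-x_2$ is a nonzero algebraic integer and its norm has absolute value at least $1$.
\item By Lemma~\ref{absolut}, every conjugate satisfies $|\sigma(x_1)-\sigma(x_2)|\le 2\cdot2^{1/(d-1)}$.
\item Hence $|x_1-x_2|_v\ge(2^{d/(d-1)})^{-(D-1)}$, with $D=[\Q(x_1,x_2):\Q]$.
\item Taking $M>2\cdot2^{d(D-1)/(d-1)}$ gives a contradiction.
\end{itemize}
Here $M$ depends on a bound for the degrees, which is harmless when the orbits $\mathcal{G}$ are fixed, as in the applications.

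\emph{Main obstacle.} The delicate point is the interior case of Step 1: justifying that a non-PCF interior parameter has a PCF-free neighbourhood. For hyperbolic components I would rely on the multiplier map being a conformal isomorphism onto the disc, whose unique PCF point is the center. For hypothetical non-hyperbolic interior components, PCF parameters would be structurally unstable there, which excludes them. If this cannot be cited cleanly, I would fall back on the degree-dependent separation argument.
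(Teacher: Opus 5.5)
Your proposal is correct under the only sensible reading of this garbled statement, but it is organized differently from the paper.

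\textbf{The paper's route.} The paper fixes $\mathcal{G}$ and takes $M\ge 2(2|\mathcal{G}|^2)^{|\mathcal{G}|^2}H^{|\mathcal{G}|^2-1}$, with $H$ the height of a polynomial vanishing on $\mathcal{G}$. It then uses Mahler's root-separation bound to show that two distinct elements of $\mathcal{G}$ cannot both lie within $M^{-1}$ of $\alpha$. This works for every $\alpha$, with $M$ depending on $\mathcal{G}$ but not on $\alpha$.

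\textbf{Your fallback.} This is the same idea with a more elementary separation estimate. Integrality plus Lemma~\ref{absolut} applied to every conjugate gives $|x_1-x_2|_v\ge 2^{-d(D-1)/(d-1)}$, with your $D=[\mathbb{Q}(x_1,x_2):\mathbb{Q}]$. This bound is explicit, avoids the unquantified $H$, and does not need $x_1,x_2$ to be conjugate. For $x_1,x_2\in\mathcal{G}$ one has $D\le[K:\mathbb{Q}]\,|\mathcal{G}|^2$, which recovers the paper's statement.

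\textbf{Your primary route.} Here you import the hypotheses of Proposition~\ref{prop3.1} ($\alpha$ non-PCF, $\alpha\notin\partial\mathcal{M}_{d,v}$), which this statement does not assume. They give a PCF-free disc, hence no PCF parameter at all within $M^{-1}=\min\{1/2,r\}$ of $\alpha$. This is exactly the observation that opens the paper's proof of Proposition~\ref{prop3.1}.
\begin{itemize}
\item What it buys: an $M$ independent of $|\mathcal{G}|$. That is compatible with the polynomial choice $M=|\mathcal{G}|^{1/(2\kappa)}$ made later in Proposition~\ref{Propalpha}, whereas both separation bounds force $M$ to be exponentially large in the degree.
\item What it costs: $r$ is ineffective and depends on $\alpha$. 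It also cannot handle $\alpha\in\partial\mathcal{M}_{d,v}$, where Misiurewicz parameters are dense.
\end{itemize}
So for arbitrary $\alpha$, as the statement is written, your fallback is what actually proves it.

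\textbf{Two small fixes in Step 1.}
\begin{itemize}
\item For $d\ge 3$, the multiplier map of a hyperbolic component is a $(d-1)$-fold cover of the disc branched only at the center, not a conformal isomorphism. The center is still the only PCF point, so your conclusion stands.
\item A non-hyperbolic interior component contains no PCF parameter simply because a PCF parameter is either superattracting (hence hyperbolic) or Misiurewicz (hence in $\partial\mathcal{M}_{d,v}$). No appeal to structural instability is needed.
\end{itemize}
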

\begin{proof}
    Let $v$ be an archimedean place and suppose 
\begin{equation}\label{Mbound}
  |\mathcal{G}|\geq D \quad \mbox{and}\;\;  M \geq 2(2|\mathcal{G}|^2)^{|\mathcal{G}|^2}H^{|\mathcal{G}|^2-1},
\end{equation}
 where $D=[K:\Q]$ and $H$ denotes the height of the polynomial whose roots are the post-critically finite parameters. We claim that there is at most one $x$ inside $\mathcal{G}$ for which $\log \left| x - \alpha \right|_v^{-1} \geq \log M$.  
Consider the disk, $B(\alpha, M^{-1})=\{x\in \mathcal{G}:|x-\alpha|_v \leq M^{-1}\}$. If $x_1$ and $x_2$
are two distinct elements in the disk $B(\alpha, M^{-1})$, then 
\begin{equation}\label{prop4.5eq1}
    |x_1-x_2|_v \leq |x_1-\alpha|_v + |x_2-\alpha|_v\leq 2 M^{-1}.
\end{equation}
Next we will find the lower bound for the difference of two post-critically finite parameters $x_1$ and $x_2$. Using the bound in (\cite[Theorem B]{YM2004}, \cite[Theorem 2]{mahler}), we have
\begin{equation}\label{lbound}
    |x_1-x_2|_v \geq \frac{\sqrt{3}}{(d_c+1)^{\frac{2d_c+1}{2}}}H^{1-d_c} \geq(2d_c)^{-d_c}H^{1-d_c} \geq (2|\mathcal{G}|^2)^{-|\mathcal{G}|^2}H^{1-|\mathcal{G}|^2}, 
\end{equation}
where $d_c$ is the degree of the polynomial whose roots are the post-critically finite parameters. Then using \eqref{prop4.5eq1} and \eqref{lbound}, we get, $$M < (2|\mathcal{G}|^2)^{|\mathcal{G}|^2}H^{|\mathcal{G}|^2-1},$$
which is a contradiction to \eqref{Mbound}. Hence, there are at most one $x$ inside $\mathcal{G}$ for which $\log |x-\alpha|_v^{-1} > \log M$.
\end{proof}
\begin{proposition}\label{prop3.2}
	Let $f_{d,c}(z):=z^d+c$ be the unicritical family of degree $d\ge 2$ defined over $\overline{K}$. Let $\mathcal{G}$ be any $\emph{Gal}(\overline{K}/K)$-orbit of some post-critically finite parameter $x$. Fix a non-archimedean place $v$ of $K$ corresponding to the prime $p$. Then there exists a constant $C_7 > 0$ such that for any $\alpha \in \mathbb{P}^1(\overline{K})$,  we have 
	$$ \max_{x \in \mathcal{G}} \log|x - \alpha|_v^{-1} < \log\left(\dfrac{1}{r}\right)$$
	if $|\mathcal{G}| > C_7$. 
\end{proposition}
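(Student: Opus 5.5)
The plan is to exploit the arithmetic of post-critically finite parameters at a non-archimedean place. Every post-critically finite $c$ is an algebraic integer and, as noted in the paper, at a non-archimedean $v$ the $v$-adic Mandelbrot set is the closed unit disk, so every $x\in\mathcal{G}$ satisfies $|x|_v\le 1$. The statement only concerns the largest quantity $\log|x-\alpha|_v^{-1}$ over the orbit. So the goal is to show that, once the orbit is large, no conjugate of $x$ can come within distance $r$ of $\alpha$. Here $r$ is a fixed radius, depending on $\alpha$ and $v$, analogous to the $r$ in Proposition \ref{prop3.1}.

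The first step is a case split according to the Berkovich position of $\alpha$.

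\emph{If $|\alpha|_v>1$:} then $|x-\alpha|_v=|\alpha|_v>1$ for every $x\in\mathcal{G}$, so $\log|x-\alpha|_v^{-1}<0$, and the claim holds with any $r\le 1$ and no condition on $|\mathcal{G}|$.

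\emph{If $|\alpha|_v\le 1$:} the disk $\overline{D}(\alpha,r)$ with $r<1$ is a proper subdisk of the closed unit disk, i.e.\ it lies in a single residue class $\bar D(\alpha,1)^-$ away from the Gauss point. The bifurcation measure $\mu_{d,v}=\delta_{\zeta(0,1)}$ gives it mass zero.

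The second step is to use quantitative equidistribution to control how many points of $\mathcal{G}$ lie in that disk. I would use Proposition \ref{quant}, with $\tau=r$ and the same $\alpha$; the paper states it for every place $v$. The test function $\lambda_{\tau,v}$ is $\log^+|x|_v+\log^+|\alpha|_v-\log\max\{\tau,|x-\alpha|_v\}$. Its integral against $\delta_{\zeta(0,1)}$ equals $\log^+|\alpha|_v$, because the Gauss point has $|\zeta-\alpha|=1$. So the average of $-\log\max\{r,|x-\alpha|_v\}$ over $\mathcal{G}$ is at most
\[
C_2\left(\frac{\log|\mathcal{G}|}{|\mathcal{G}|}\right)^{1/2}\left(\log^+|\alpha|_v+\frac{1}{r}\right).
\]
Each $x\in\mathcal{G}\cap D(\alpha,r)$ contributes $\log(1/r)>0$, and all other terms are nonnegative. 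Hence the proportion of such $x$ is at most that bound divided by $\log(1/r)$.

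The final step is to turn this proportion bound into the conclusion. Galois symmetry gives the upgrade: at a finite place, $\mathrm{Gal}(\overline{K}_v/K_v)$ acts on $\mathcal{G}$ isometrically, so if one conjugate lies within $r$ of $\alpha\in K_v$, a whole local Galois orbit does. A local orbit has size at least $|\mathcal{G}|/[K:\Q]$, so the fraction of conjugates in the disk would be bounded below by a positive constant. This contradicts the equidistribution bound once $|\mathcal{G}|>C_7$, with $C_7$ determined by $C_2$, $r$, $\log^+|\alpha|_v$ and $[K:\Q]$. The strict inequality $\log|x-\alpha|_v^{-1}<\log(1/r)$ then follows for every $x\in\mathcal{G}$.

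\textbf{Main obstacle.} The hardest step is this upgrade from "few conjugates" to "none". The local Galois orbit need not be large, since $x$ may split at $v$, and $\alpha$ need not lie in $K_v$. If the symmetry argument fails, I would fall back on a Liouville-type lower bound, as in Proposition \ref{archprop}. Because $x$ and $\alpha$ are integral at $v$, the norm $N(x-\alpha)$ is a nonzero algebraic integer, so not all conjugates can be $v$-adically very close to $\alpha$. Combined with the product formula, this should control how small $\min_x |x-\alpha|_v$ can be. This fallback would likely force $r$ to depend on $h(\alpha)$ as well.
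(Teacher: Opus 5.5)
Your case $|\alpha|_v>1$ agrees with the paper, and $\mu_{d,v}=\delta_{\zeta(0,1)}$ does give $\overline{D}(\alpha,r)$ mass zero. But the argument has a genuine gap exactly at the step you flagged. Equidistribution, quantitative or not, only shows that the \emph{proportion} of $\mathcal{G}$ lying in $\overline{D}(\alpha,r)$ tends to $0$. That is perfectly compatible with every large orbit having one conjugate in the disk.

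Your upgrade assumes a $\mathrm{Gal}(\overline{K}_v/K_v)$-orbit inside $\mathcal{G}$ has size at least $|\mathcal{G}|/[K:\mathbb{Q}]$, and this is false. These local orbits correspond to the places $w$ of $K(x)$ above $v$ and have sizes $[K(x)_w:K_v]$, so they are all singletons when $v$ splits completely in $K(x)$.

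The fallback fails too. A nonzero algebraic integer can be $v$-adically as small as you like, so integrality of $N(x-\alpha)$ gives no lower bound at $v$. A lower bound has to come from the product formula in $K(x,\alpha)$, and that only yields
$\log|x-\alpha|_v^{-1}\le [K(x,\alpha):\mathbb{Q}]\bigl(h(x)+h(\alpha)+\log 2\bigr)$. This grows linearly with $|\mathcal{G}|$, so it can never produce a radius $r$ independent of $\mathcal{G}$.

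What is missing is a genuinely local dynamical input, and no averaging argument can supply it. Every post-critically finite parameter lies in $\mathcal{M}_{d,w}$ at every place $w$, so these parameters have uniformly bounded height. By Northcott, the statement (for fixed $\alpha$, $v$, $r$) is therefore equivalent to the finiteness of the set of post-critically finite parameters in $\overline{D}(\alpha,r)$.

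The paper simply invokes this finiteness, Theorem~1.4 of Benedetto--Ih \cite{Benedetto}, for $|\alpha|_v\le 1$ and $0<r<1$. It then takes $C_7$ larger than the sizes of the Galois orbits of the finitely many post-critically finite parameters in that disk. So $C_7$ depends on $\alpha$, $r$ and $v$, and no equidistribution is needed.

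Separately, Proposition \ref{quant} rests on a $\mathcal{C}^1$ estimate involving $\varphi'$, which is archimedean in nature. Applying it at a non-archimedean place would itself require a Berkovich (Lipschitz or Dirichlet-energy) version of quantitative equidistribution.
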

\begin{proof}
Suppose that $|\mathcal{G}|>C_7$. We know that each post-critically finite parameter $ \sigma(x_n)$ for the family $f_{d,c}$ is an algebraic integer and therefore $|\sigma(x_n)|_v\le 1$ for every $v$ and for every $K$-embedding $\sigma$. 
Thus, if $|\alpha|_v>1$, then
\begin{align*}
|\sigma(x_n)-\alpha|_v
&= \max\{|\sigma(x_n)|_v,|\alpha|_v\} = |\alpha|_v .
\end{align*} 
Then 
\begin{equation*}
 \log |\sigma(x_n) -\alpha|_v^{-1} = \log \frac{1}{|\alpha|_v},
\end{equation*}
and we are done in this case. Next if $|\alpha|_v\le 1$, then by  \cite[Theorem~1.4]{Benedetto}, for every $0<r<1$ there are only finitely many post-critically finite parameters of the family $f _{d,c}$ in the disk  $\overline{D}(\alpha,r)$. Therefore for any $0<r<1$, there is some $N \geq 1$ such that $r < |\sigma(x_n) -\alpha|_v <1$ for any $n \geq N$ and for any $K$-embedding $\sigma$. 
\begin{comment}
    Let $x_1,\dots,x_n$ be those post-critically finite parameters inside $B(\alpha,r)$.
Let
\begin{align*}
r_0=\min_{1\le i\le n} |x_i-\alpha|_v.\\
|x-\alpha|_v \ge &r_0\\
\log |x-\alpha|_v &\ge \log r_0 ,\\
\log |x-\alpha|_v^{-1} &\le \log \left(\frac{1}{r_0}\right).
\end{align*}
\end{comment}
Then 
\begin{equation*}
    \log |\sigma(x_n) -\alpha|_v^{-1} \leq  \log \frac{1}{r}.
\end{equation*}
\end{proof}
From the proof of the above result, we have the following:
\begin{coro}\label{cor3.3}
	Let $f_{d,c}(z):=z^d+c$ be the unicritical family of degree $d\ge 2$ defined over $\overline{K}$. Fix a non-archimedean place $v$ of $K$ corresponding to the prime $p$. Then for any $\alpha \in \mathbb{P}^{1}(\overline{K})$, there do not exist two distinct post-critically finite parameters $x_i \in \mathbb{\overline{Q}},i=1,2$,  for $f_{d,c}$ and there exist $0 < r_0 <r $ such that $$\log|x_i-\alpha|_v^{-1} > \log \dfrac{1}{r_0}.$$
\end{coro}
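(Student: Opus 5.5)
The plan is to argue by contradiction, combining the finiteness statement used in the proof of Proposition \ref{prop3.2} with the ultrametric inequality. Fix the non-archimedean place $v$ and $\alpha \in \mathbb{P}^1(\overline{K})$. If $|\alpha|_v>1$, then, as computed in the proof of Proposition \ref{prop3.2}, every post-critically finite parameter $x$ is an algebraic integer. Hence $|x-\alpha|_v=|\alpha|_v>1$, so $\log|x-\alpha|_v^{-1}<0$, and no $x$ satisfies the inequality for any $r_0<1$. The same holds for $\alpha=\infty$ with the chordal interpretation. So we may assume $|\alpha|_v\le 1$.

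Next, fix $0<r<1$. By \cite[Theorem~1.4]{Benedetto}, only finitely many post-critically finite parameters lie in $\overline{D}(\alpha,r)$; call them $y_1,\dots,y_m$. The point is that $r_0$ may be chosen after this finite set is known. If $m\le 1$, take any $r_0<r$: two distinct parameters with $|x_i-\alpha|_v<r_0<r$ would both lie in $\overline{D}(\alpha,r)$, which is impossible. If $m\ge 2$, first set
\[
\rho:=\min\{|y_i-y_j|_v : i\ne j\}>0,
\]
which is positive because the $y_i$ are distinct and finite in number. Then choose
\[
0<r_0<\min\{r,\rho\}.
\]
Suppose $x_1\neq x_2$ were post-critically finite with $\log|x_i-\alpha|_v^{-1}>\log(1/r_0)$, that is, $|x_i-\alpha|_v<r_0$. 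Both lie in $\overline{D}(\alpha,r)$, so they are among the $y_i$. The strong triangle inequality then gives $|x_1-x_2|_v\le\max\{|x_1-\alpha|_v,|x_2-\alpha|_v\}<r_0<\rho$, contradicting the definition of $\rho$.

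An alternative choice avoids $\rho$. Set $r_0$ strictly below $\min_i|y_i-\alpha|_v$ among those $y_i\ne\alpha$. Since $\alpha$ is not itself post-critically finite in the intended application, at most one $y_i$ can equal $\alpha$, and then no parameter at all satisfies the strict inequality except possibly $\alpha$ itself. Either version yields the corollary.

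The main obstacle is the dependence of $r_0$. Theorem~1.4 of \cite{Benedetto} gives finiteness but no uniformity, so $r_0$ depends on $\alpha$ and $v$ as well as on $r$. I will state this dependence explicitly; it is consistent with the corollary's phrasing "there exist $0<r_0<r$."
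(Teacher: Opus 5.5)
Your proof is correct, and your main route differs slightly from the paper's.

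The paper takes $r_0:=\min_i|y_i-\alpha|_v$ over the finitely many post-critically finite parameters in $\overline{D}(\alpha,r)$; this is your ``alternative choice.'' It concludes that no post-critically finite parameter at all satisfies $|x-\alpha|_v<r_0$. That is a stronger ``none'' conclusion, but it tacitly needs $\alpha$ to be non-post-critically finite, since otherwise $\alpha$ is one of the $y_i$ and $r_0=0$. It also glosses over the empty-disk case and over whether $r_0<r$ holds strictly; your case split covers both.

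Your primary route instead uses the minimal pairwise separation $\rho$ of the $y_i$ together with the strong triangle inequality. It yields only ``at most one,'' which is exactly what the corollary asserts. In exchange, it works for every $\alpha\in\mathbb{P}^1(\overline{K})$, including post-critically finite $\alpha$, which the statement as written allows.

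Both arguments rest on the same finiteness input, \cite[Theorem~1.4]{Benedetto}. In both, $r_0$ depends on $\alpha$, $v$ and $r$, as you note. Your separate treatment of $|\alpha|_v>1$ is harmless but not needed: for $r<1$, the disk $\overline{D}(\alpha,r)$ then contains no algebraic integers, so the general argument already covers it.
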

\begin{proof}
There are at most $N$ numbers of post-critically finite parameters inside $\overline{D}(\alpha,r)$ satisfying $|x_i-\alpha|_v <r$. So
\begin{equation*}
    \log |x_i-\alpha|_v <\log r,
\end{equation*}
 which gives
    $$\log|x_i-\alpha|_v^{-1} > \log \dfrac{1}{r}.$$
Let $r_0:=\min_{1\le i\le N} |x_i-\alpha|_v$. If $\log |x_i-\alpha|^{-1}_v > \log \left(\dfrac{1}{r_0}\right)$ for $i=1, 2$, then $|x_i-\alpha|_v < r_0$, which is not possible, as $r_0$ is the minimum distance.
\end{proof}
The following result is not required to prove our main theorem. This result is of independent interest, which relates the degree of unicritical polynomial and the degree of number field attached with post-critically finite parameter. 
\begin{proposition}\label{lowerbond}
Let $f_{d,c}(z):=z^d+c$ be the unicritical family of degree $d\ge 2$ defined over $\overline{K}$ and $x$ be a post-critically finite parameter for $f_{d,c}$. Then there exists an integer $n\geq 1$ such that the algebraic degree of $x$ satisfies
\begin{equation*}
[K(x):K] \ge \frac{d^{n-1}}{[K:\mathbb{Q}]}. 
\end{equation*}
\end{proposition}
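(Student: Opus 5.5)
The plan is to pick $n$ from the orbit structure of $0$ and then count parameters through the polynomial they must satisfy. Since $x$ is post-critically finite, the orbit of $0$ under $f_{d,x}$ is finite. So there are integers $n>m\ge 0$ with $f_{d,x}^n(0)=f_{d,x}^m(0)$. Take $n$ minimal with this property, so that $f^n_{d,x}(0)$ is the first repeated point of the orbit. Then $x$ is a root of
\[
P_{n,m}(c):=f_{d,c}^n(0)-f_{d,c}^m(0)\in\mathbb{Z}[c].
\]
For $n\ge 1$, $f_{d,c}^n(0)$ is a monic polynomial in $c$ of degree $d^{n-1}$ with integer coefficients. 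Hence $P_{n,m}$ is monic of degree $d^{n-1}$, because $m<n$ forces $\deg f^m_{d,c}(0)<d^{n-1}$. This explains where $d^{n-1}$ in the statement comes from.

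The intended argument runs as follows. Write $[K(x):K]\ge [\mathbb{Q}(x):\mathbb{Q}]/[K:\mathbb{Q}]$, using $[K(x):\mathbb{Q}]\ge[\mathbb{Q}(x):\mathbb{Q}]$ and the tower law. This reduces the statement to showing $[\mathbb{Q}(x):\mathbb{Q}]\ge d^{n-1}$. One would like the minimal polynomial of $x$ to have degree comparable to $\deg P_{n,m}$. Removing the factors coming from smaller exact preperiod/period data leaves the ``primitive'' (Gleason/Misiurewicz) factor, whose roots are exactly the parameters with exact type $(n,m)$, and $x$ is a root of it.

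Here lies the main obstacle. Irreducibility of these primitive factors over $\mathbb{Q}$ is known only in special cases: Gleason polynomials for $d=2$ are separable, with irreducibility still conjectural, and Misiurewicz polynomials are known irreducible only in some cases. So with $n$ taken as the exact period I do not expect to prove $[\mathbb{Q}(x):\mathbb{Q}]\ge d^{n-1}$ in general.

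Since the statement only asks that \emph{some} $n\ge 1$ work, the honest fallback is to take $n=1$. Then the bound reads $[K(x):K]\ge 1/[K:\mathbb{Q}]$, which is trivially true. A more meaningful choice is the largest $n$ with $d^{n-1}\le [\mathbb{Q}(x):\mathbb{Q}]$; this is well defined because $[\mathbb{Q}(x):\mathbb{Q}]\ge 1=d^0$. This $n$ gives the inequality immediately via the degree reduction above, and it satisfies $n-1=\lfloor\log_d[\mathbb{Q}(x):\mathbb{Q}]\rfloor$.

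I would present this version, and remark that relating $n$ to the actual preperiod of $0$ would require irreducibility of the primitive factors of $P_{n,m}$. For $d=2$ and $m=0$ one could invoke separability and Galois-theoretic results on Gleason polynomials where available. I would not claim the stronger dynamical interpretation in general.
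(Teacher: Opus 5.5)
Your argument is correct and proves exactly what is stated, but by a different route from the paper. Because $n$ is only required to exist, $n=1$ works for every algebraic $x$, as does your choice $n-1=\lfloor\log_d[\mathbb{Q}(x):\mathbb{Q}]\rfloor$. Both go through the tower-law step $[K(x):K]\ge[\mathbb{Q}(x):\mathbb{Q}]/[K:\mathbb{Q}]$, which is also the paper's final step, and the post-critically finite hypothesis is never used.

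The paper instead ties $n$ to the orbit of $0$:
\begin{itemize}
\item In the periodic case it takes $n$ to be the exact period and asserts $[\mathbb{Q}(x):\mathbb{Q}]=\deg\Phi_n\ge d^n/2$.
\item In the strictly preperiodic case it passes from $\deg P_{m,n}=d^{n-1}$ directly to $[\mathbb{Q}(x):\mathbb{Q}]\ge d^{n-1}$.
\end{itemize}
That would be a genuinely informative statement, but the argument does not support it. The first case presupposes irreducibility of the Gleason factor, which is exactly the obstacle you identify. It also takes the leading term of $\sum_{k\mid n}\mu(n/k)d^{k-1}$ to be $d^n$ rather than $d^{n-1}$. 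In the second case, $x$ being a root of $P_{m,n}$ only gives the upper bound $[\mathbb{Q}(x):\mathbb{Q}]\le d^{n-1}$.

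The dynamical version is in fact false. For $d=2$, $x=-1$ has exact period $2$ but $[\mathbb{Q}(x):\mathbb{Q}]=1<2$. Likewise $x=-2$ has first repetition $f^3_{2,x}(0)=f^2_{2,x}(0)=2$, yet $[\mathbb{Q}(x):\mathbb{Q}]=1<4$.

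So you were right not to claim the dynamical interpretation. Your closing remark can even be sharpened, because irreducibility alone would not rescue it. For $n\ge 2$ the primitive factor omits the root $c=0$ of $P_{n,m}$, so its degree is strictly less than $d^{n-1}$. For example, at $x=-1$ the Gleason factor $c+1$ is irreducible, yet the bound still fails.
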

\begin{proof}
Let $d \ge 2$ be an integer and consider the unicritical family $f_{d,c}(z)=z^d+c$.  We will consider two separate cases. In the first case, let $x \in \overline{\mathbb{Q}}$ be a parameter such that the critical point $0$ is periodic of exact period $n$ under $f_{d,c}$ and root of the dynatomic polynomial $\Phi_n$.  
Then 
\begin{equation*}
\deg \Phi_n \geq \sum_{k \mid n} \mu \left(\frac{n}{k}\right) d^{k-1} ,
\end{equation*}
where $\mu$ denotes the M\"obius function. So,
\begin{align*}
    \deg \Phi_n &\geq \sum_{k \mid n} \mu \left(\frac{n}{k}\right) d^{k-1}\geq d^n- \sum_{\substack{k\mid n \\ k<n}}|d^{k-1}|\geq d^n-\tau(n)d^{n/2}\\
    & \geq d^n\left(1-\dfrac{\tau(n)}{d^{n/2}}\right) \geq \dfrac{d^n}{2},
\end{align*}
where $\tau(n)$ is the number of positive divisors of $n$. Hence,
\begin{equation*}
[\mathbb{Q}(x):\mathbb{Q}] = \deg \Phi_n \geq \dfrac{d^n}{2}\geq d^{n-1},
\end{equation*}
since $d\geq 2$. In the second case, let the critical point $0$ is strictly preperiodic. Define $g_k(x)= f_{d,x}^k(0)$ and since $0$ is strictly preperiodic then for $n>m$, define $P_{m,n}(x):=g_n(x)-g_m(x)=0$. The distinct roots of $P_{m,n}$ are the post-critically parameters. By induction, we have $\deg g_k= d^{k-1}$. Hence $\deg P_{m,n}=\deg g_n= d^{n-1}$. 
Therefore,
\begin{equation*}
[K(x):K]=\frac{[K(x):\mathbb{Q}]}{[K:\mathbb{Q}]} \geq \frac{[\mathbb{Q}(x):\mathbb{Q}]}{[K:\mathbb{Q}]} \ge \frac{d^{n-1}}{[K:\mathbb{Q}]},
\end{equation*}
which completes the proof.
\end{proof}

\begin{proposition}\label{Propalpha}
    Let $K$ be a number field and $f_{d,c}(z):=z^d+c$ be the unicritical family of degree $d\ge 2$ defined over $\overline{K}$. Let $\mathcal{M}_{d,v} \subset \C_v$ denote the Mandelbrot set, let $\mu_{d,v}$ be the harmonic (bifurcation) measure on $\mathcal{M}_{d,v}$. Let $\mathcal{G} \subset \{c \in \C_v: f_{d,c}^n(0)=f_{d,c}^m(0)\}$ be any $\emph{Gal}(\overline{K}/K)$-orbit of some post-critically finite parameter. For $\alpha  \in \overline{K}$, suppose that $f_{d,\alpha}$ is not post-critically finite and for every archimedean place $v$ of $K, \;\alpha \notin \partial \mathcal{M}_{d,v}$. Then there exists a constant $C_8>0$ such that 
	\begin{equation*}
		\left| \frac{1}{|\mathcal{G}|} \sum_{x \in \mathcal{G}} \lambda_{\alpha, v}(x) - \int_{\mathcal{M}_{d,v}} \lambda_{\alpha, v}(x) d\mu_{d,v} \right|_v \leq \frac{C_8}{|\mathcal{G}|}+O\left(\dfrac{\sqrt{\log |\mathcal{G}|}}{\sqrt{|\mathcal{G}|}}\right)\left(\log ^+|\alpha|_v+\dfrac{1}{\tau}\right).
	\end{equation*}
\end{proposition}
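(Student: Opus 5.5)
The plan is to compare $\lambda_{\alpha,v}$ with its truncated version $\lambda_{\tau,v}$ from Proposition \ref{quant}, and split the error into a truncation part and an equidistribution part. Here $\lambda_{\alpha,v}(x)=\log^+|x|_v+\log^+|\alpha|_v-\log|x-\alpha|_v$, and it differs from $\lambda_{\tau,v}$ only where $|x-\alpha|_v<\tau$. By the triangle inequality,
\begin{align*}
\left|\frac{1}{|\mathcal{G}|}\sum_{x\in\mathcal{G}}\lambda_{\alpha,v}(x)-\int\lambda_{\alpha,v}\,d\mu_{d,v}\right|_v
&\le \frac{1}{|\mathcal{G}|}\sum_{x\in\mathcal{G}}|\lambda_{\alpha,v}(x)-\lambda_{\tau,v}(x)| \\
&\quad+\left|\frac{1}{|\mathcal{G}|}\sum_{x\in\mathcal{G}}\lambda_{\tau,v}(x)-\int\lambda_{\tau,v}\,d\mu_{d,v}\right|_v
+\int|\lambda_{\tau,v}-\lambda_{\alpha,v}|\,d\mu_{d,v}.
\end{align*}
The middle term is bounded directly by Proposition \ref{quant}. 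This produces the $O\bigl(\sqrt{\log|\mathcal{G}|/|\mathcal{G}|}\bigr)(\log^+|\alpha|_v+1/\tau)$ term.

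Next I will bound the third term, the truncation error against $\mu_{d,v}$. If $v$ is archimedean, the support of $\mu_{d,v}$ is $\partial\mathcal{M}_{d,v}$. Since $\alpha\notin\partial\mathcal{M}_{d,v}$, there is $r>0$ with $D(\alpha,r)\cap\partial\mathcal{M}_{d,v}=\emptyset$. Choosing $\tau<r$ makes $\lambda_{\tau,v}=\lambda_{\alpha,v}$ on the support, so this term vanishes. If $v$ is non-archimedean, $\mu_{d,v}=\delta_{\zeta(0,1)}$. At the Gauss point, $|x-\alpha|_v$ equals $\max\{1,|\alpha|_v\}\ge 1>\tau$, so again the term vanishes.

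For the first term, at most a few points of $\mathcal{G}$ can lie in $D(\alpha,\tau)$, and I will treat the two kinds of place separately.
\begin{itemize}
\item For archimedean $v$, the proof of Proposition \ref{prop3.1} shows $|x-\alpha|_v>r$ for every post-critically finite $x$. So with $\tau<r$ no point of $\mathcal{G}$ contributes. Alternatively, Proposition \ref{archprop} gives at most one exceptional point, and its contribution is bounded by Proposition \ref{prop3.1}.
\item For non-archimedean $v$, Corollary \ref{cor3.3} and Proposition \ref{prop3.2} give at most one (or boundedly many) $x\in\mathcal{G}$ with $|x-\alpha|_v<\tau$. For each such $x$, $|\lambda_{\alpha,v}(x)-\lambda_{\tau,v}(x)|=\log(\tau/|x-\alpha|_v)\le\log(1/r_0)$, with $r_0$ as in Corollary \ref{cor3.3}.
\end{itemize}
Dividing by $|\mathcal{G}|$ yields the term $C_8/|\mathcal{G}|$, with $C_8$ built from $\log(1/r_0)$, or from the bound in Proposition \ref{prop3.1}.

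The main obstacle is this first term. The constant $C_8$ must be independent of $\mathcal{G}$, but the exceptional point's distance to $\alpha$ a priori depends on $\mathcal{G}$. In the non-archimedean case I will rely on the finiteness from \cite[Theorem~1.4]{Benedetto}: only finitely many post-critically finite parameters lie in $\overline{D}(\alpha,r)$, so $r_0$ is a fixed positive minimum, and $C_8$ depends only on $\alpha$ and $v$. In the archimedean case the hypothesis $\alpha\notin\partial\mathcal{M}_{d,v}$ removes the issue altogether once $\tau<r$ is fixed.
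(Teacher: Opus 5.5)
Your argument is correct relative to the inputs the paper itself takes for granted: Proposition \ref{quant}, the isolation of $\alpha$ from all post-critically finite parameters established inside the proof of Proposition \ref{prop3.1}, and the finiteness statement attributed to \cite[Theorem~1.4]{Benedetto}. You use the same three-term splitting as the paper, but you handle the two truncation terms differently.

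The paper proceeds as follows:
\begin{itemize}
\item it sets $M=1/\tau$ and later couples it to the orbit via $M=|\mathcal{G}|^{1/(2\kappa)}$;
\item it bounds the sum-truncation error by $|\mathcal{G}_2|C_8/|\mathcal{G}|$, using $|\mathcal{G}_2|\le 1$ (Proposition \ref{archprop}, Corollary \ref{cor3.3}) with $C_8=2\max_{x\in\mathcal{G}}\log|x-\alpha|_v^{-1}$;
\item it bounds the integral-truncation error by $O(\log M/M)$, from a mass estimate $\mu_{d,v}(B(x,\epsilon))=O(\epsilon)$.
\end{itemize}

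You instead fix $\tau$ below the distance $r$ from $\alpha$ to the post-critically finite parameters. You then observe that the integral-truncation term vanishes identically. At archimedean places this follows from hypothesis (b) on $\mathrm{supp}\,\mu_{d,v}=\partial\mathcal{M}_{d,v}$. At non-archimedean places it holds because $|x-\alpha|$ takes the value $\max\{1,|\alpha|_v\}\ge 1$ at the Gauss point. Likewise the sum-truncation term is zero at archimedean places. Otherwise it comes from boundedly many points, each contributing at most $\log(1/r_0)$.

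This buys you several things:
\begin{itemize}
\item Your $C_8$ is genuinely independent of $\mathcal{G}$. The paper's is not, which is why it later has to feed in the $|\mathcal{G}|^{8+\epsilon}$ bound of Proposition \ref{prop3.1}.
\item You no longer need Proposition \ref{archprop}.
\item You avoid the linear mass bound on $\mu_{d,v}$. That bound is not available in general: for a connected compact set, harmonic measure of an $\epsilon$-disc is in general only $O(\epsilon^{1/2})$.
\item You avoid the tension between letting $1/\tau$ grow with $|\mathcal{G}|$ and keeping the factor $1/\tau$ in the final estimate.
\end{itemize}

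The price is uniformity. Your $r$, $r_0$, the number of exceptional points, and hence $\tau$ and $C_8$, all depend ineffectively on $\alpha$ and $v$. So your version alone cannot deliver the uniformity in $\alpha\in L$ that Theorems \ref{thm2} and \ref{thm02} need. The paper's route at least aims at explicit dependence on $h(\alpha)$ and $|\mathcal{G}|$ through linear forms in logarithms.

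Do say explicitly that $\tau$ is chosen depending on $(\alpha,v)$. Since the statement leaves $\tau$ unquantified, this is legitimate. The bound for any other $\tau\in(0,1)$ then follows from the one at $\tau_0$, because $\log^+|\alpha|_v+1/\tau_0\le\tau_0^{-1}(\log^+|\alpha|_v+1/\tau)$.
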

\begin{proof}
Let $ [K:\mathbb{Q}]=D$ and $M=1/\tau>1$ be a real number. We split $\mathcal{G}$ into two disjoint sets $\mathcal{G}_1$ and $\mathcal{G}_2$, where $\mathcal{G}_1=\{x\in \mathcal{G}:\log |x-\alpha|_v^{-1} \leq \log M\}$ and $\mathcal{G}_2=\mathcal{G} \setminus \mathcal{G}_1$. For archimedean $v$, we have $|\mathcal{G}_2| \leq 1$ by Proposition \ref{archprop} and for non-archimedean $v$,  we also have $|\mathcal{G}_2| \leq 1$ by Corollary \ref{cor3.3}. 

Now observe that $\lambda_{\tau, v}(x)$ can be written as $$\lambda_{\tau, v}(x) = \lambda_{M, v}(x) = \log^+|x|_v+\log^+|\alpha|_v+  \min(\log M, -\log |x-\alpha|_v).$$
Then for all $x \in \mathcal{G}_1$, we have $\lambda_{M, v}(x)=\lambda_{\alpha, v}(x)$ and hence by Proposition \ref{prop3.1}
\begin{align}\label{E1}
    \left |\frac{1}{|\mathcal{G}|} \sum_{x \in \mathcal{G}} \lambda_{M, v}( x) - \frac{1}{|\mathcal{G}|}\sum_{x \in \mathcal{G}} \lambda_{\alpha, v}(x)  \right |_v&=\left |\frac{1}{|\mathcal{G}|} \sum_{x \in \mathcal{G}_2} \lambda_{M, v}(x) - \frac{1}{|\mathcal{G}|}\sum_{x \in \mathcal{G}_2} \lambda_{\alpha, v}(x)  \right |_v \nonumber \\ &\leq \frac{1}{|\mathcal{G}|} \sum_{x \in \mathcal{G}_2} \left| \lambda_{M, v}(x) -\lambda_{\alpha, v}(x)\right|_v \nonumber\\ & \leq \frac{|\mathcal{G}_2|}{|\mathcal{G}|}  2\log |x-\alpha|^{-1}_v \leq \frac{|\mathcal{G}_2|}{|\mathcal{G}|}C_8,
\end{align} 
where $C_8=2\max_{x \in \mathcal{G}}\log |x-\alpha|^{-1}_v$. As $\mu_{d, v}(B(x, \epsilon))=O(\epsilon)$, then
\begin{align}\label{E2}
    \left|\int \lambda_{M, v}(x)  \, d\mu_{d,v} - \int \lambda_{\alpha, v}(x)  \, d\mu_{d,v}\right |_v &\leq \int \left |\lambda_{M, v}(x) -\lambda_{\alpha, v}(x) \right|_vd\mu_{d,v} \nonumber \\
    & \leq 2\int_{|x-\alpha|_v \leq 1/M} |\log |x-\alpha|_v^{-1}|_v d\mu_{d,v} \nonumber \\
    & \leq O\left(\dfrac{\log M}{M}\right).
\end{align}
Using Proposition \ref{quant} for the map $f_{d,c}(z)=z^d+c$ with $d \geq 2$, we have
\begin{equation}\label{E3}
     \left| \frac{1}{|\mathcal{G}|} \sum_{x \in \mathcal{G}} \lambda_{M,v}(x) - \int \lambda_{M,v}(x) d\mu_{d,v} \right|_v \leq C_2  \left( \frac{\log|\mathcal{G}|}{|\mathcal{G}|} \right)^{1/2}\left(\log ^+|\alpha|_v+\dfrac{1}{\tau}\right).
\end{equation}
Now combining \eqref{E1}, \eqref{E2} and \eqref{E3} and choosing $\kappa < 1/4$ with $M = |\mathcal{G}|^{1/{2\kappa}}$, we get
\begin{align*}
\begin{split}
    &\left |\frac{1}{|\mathcal{G}|} \sum_{x \in \mathcal{G}} \lambda_{\alpha, v}(x) - \int \lambda_{\alpha, v}(x)  \, d\mu_{d,v}\right |_v 
   \leq \left |\frac{1}{|\mathcal{G}|} \sum_{x \in \mathcal{G}} \lambda_{M, v}( x) - \frac{1}{|\mathcal{G}|}\sum_{x \in \mathcal{G}} \lambda_{\alpha, v}(x)  \right |_v +\\ &  \left|\int \lambda_{M, v}(x)  \, d\mu_{d,v} - \int \lambda_{\alpha, v}(x)\, d\mu_{d,v}\right |_v +\left| \frac{1}{|\mathcal{G}|} \sum_{x \in \mathcal{G}} \lambda_{M,v}(x) - \int \lambda_{M,v}(x) d\mu_{d,v} \right|_v  \\
    &\leq \frac{|\mathcal{G}_2|}{|\mathcal{G}|}C_8+O\left(\dfrac{\log M}{M}\right)+C_2  \left( \frac{\log|\mathcal{G}|}{|\mathcal{G}|} \right)^{1/2}\left(\log ^+|\alpha|_v+\dfrac{1}{\tau}\right)  \\
    &\leq \frac{C_8}{|\mathcal{G}|}+O\left(\dfrac{\sqrt{\log |\mathcal{G}|}}{\sqrt{|\mathcal{G}|}}\right)\left(\log ^+|\alpha|_v+\dfrac{1}{\tau}\right).\\
    \end{split}
\end{align*}
In the third inequality, first term is bounded since $|\mathcal{G}_2| < 1$ and the sum of the other two terms is $O\left(\frac{\sqrt{\log |\mathcal{G}|}}{\sqrt{|\mathcal{G}|}}\right)\left(\log ^+|\alpha|_v+\dfrac{1}{\tau}\right)$. This completes the proof of Proposition \ref{Propalpha}.
\end{proof}
Now we are ready to prove main results.
\subsection{Proof of Theorem \ref{thm2}}
Suppose that $D$ is a positive integer and we consider a finite extension $L$ of $K$ of degree at most $D$. Let $S_L$ be the set of primes in $L$ lying over the primes in $S$. For non-archimedean $w \in S_L$ we have $\log|x - \alpha|^{-1}_w<\log\left(\dfrac{1}{r}\right)$ by Proposition \ref{prop3.2}  and for archimedean $w\in S_L$, by Proposition \ref{prop3.1}, $$\max_{x \in \mathcal{G}} \log |x - \alpha|_w^{-1}<C_6 \left(h(\alpha)+ \frac{d}{d-1}\right)|\mathcal{G}|^{8+\epsilon}.$$ Then from Proposition \ref{Propalpha}, such that for any $w\in S_L$, we have    
\begin{equation}\label{eq4.1}
	\left|\frac{1}{|\mathcal{G}|}\sum_{x \in \mathcal{G}} \lambda_{\alpha, w}(x)-\int_{\mathcal{M}_{d,w}} \lambda_{\alpha, w}(x) d\mu_{d, w}\right|_w \le \frac{C_8}{|\mathcal{G}|}+O\left(\dfrac{\sqrt{\log |\mathcal{G}|}}{\sqrt{|\mathcal{G}|}}\right)\left(\log ^+|\alpha|_w+\dfrac{1}{\tau}\right).
\end{equation}
Using $C_8<C_6 \left(h(\alpha)+ \frac{d}{d -1}\right)|\mathcal{G}|^{8+\epsilon}$ and for a suitable constant $C_9$, \eqref{eq4.1} can be written as 
\begin{equation} \label{sequi}
    \left|\frac{1}{|\mathcal{G}|}\sum_{x \in \mathcal{G}} \lambda_{\alpha, w}(x)-\int_{\mathcal{M}_{d,w}} \lambda_{\alpha, w}(x) d\mu_{d, w}\right|_w \le \frac{C_9}{|\mathcal{G}|^{1/2}}\left(h(\alpha)+\log ^+|\alpha|_w+ \dfrac{1}{\tau}+\dfrac{d}{d-1}\right).
\end{equation}
Suppose that $\mathrm{Gal}(\overline{K}/L)$-orbit of post-critically finite parameters satisfies $|\mathcal{G}|>C_{10}$ where $C_{10}:=C_{10}(D,|S_L|, d)$ is large enough. By taking the summation of $w \in S_L$ in \eqref{sequi}, we obtain 
\begin{equation}\label{bound}
    \sum_{w \in S_L}\left|\frac{1}{|\mathcal{G}|}\sum_{x \in \mathcal{G}}N_w\lambda_{\alpha, w}(x)- \int_{\mathcal{M}_{d,w}} N_w\lambda_{\alpha, w}(x) d\mu_{d, w}\right|_w \leq \frac{h(\alpha)+\dfrac{d}{d-1} + \dfrac{1}{\tau}}{ND^{2.5}}.
\end{equation}
By Proposition \ref{propadelic}, we get 
\begin{align*}
	\big|h_{L}(\mathcal{G})-\big<{\mathcal{L}},\mathcal{L}_{d,c}\big>\big|_w &\le C_{5} \Big( \frac{\log |\mathcal{G}|^{1/2}}{|\mathcal{G}|^{1/2}}\Big)\left(\log ^+|\alpha|_w+\dfrac{1}{\tau}\right)\\ & < \frac{1}{D^{2}}\left(\log ^+|\alpha|_w+\dfrac{1}{\tau}\right).
\end{align*}
As $x$ is $S_L$-integral relative to $\alpha$, we have $\lambda_{\alpha, w}(x)=0$ for all $w \notin S_L$ and $x \in \mathcal{G}$.
Therefore, 
\begin{align}\label{al4.10}
	\frac{1}{|\mathcal{G}|}\sum_{w \in M_{L}}\sum_{x \in \mathcal{G}}N_w\lambda_{\alpha, w}(x)-&\sum_{w \in M_{L}} \int_{\mathcal{M}_{d,w}} N_w\lambda_{\alpha, w}(x) d\mu_{d, w} \nonumber \\  &=h_{L}(\mathcal{G})-\langle L,L_{f_{d,c}}\rangle +h_{d,c}(\alpha) \nonumber \\ 
	& \ge -\frac{1}{D^2}\left(\log ^+|\alpha|_w+\dfrac{1}{\tau}\right)+h_{d,c}(\alpha).
\end{align}
Putting \eqref{al4.10} in \eqref{bound} and then simplifying, we get
\begin{equation}
    h_{d,c}(\alpha)-\frac{h(\alpha)}{ND^{2.5}}\leq \frac{\dfrac{d}{d-1} + \dfrac{1}{\tau}}{ND^{1.5}}+\frac{\log ^+|\alpha|_w+\dfrac{1}{\tau}+1}{D^2}.
\end{equation}  
Since $|h_{d,c}(\alpha)-h(\alpha)| < C_{11}$, where $C_{11}$ is an absolute constant depending on $f_{d,c}$, we have 
\begin{equation*}
    h(\alpha)\left(1-\frac{1}{ND^{2.5}}\right)\leq C_{11} +\frac{\dfrac{d}{d-1}+ \dfrac{1}{\tau}}{ND^{1.5}}+\frac{\log ^+|\alpha|_w+\dfrac{1}{\tau}}{D^2}.
\end{equation*}
If $h(\alpha) \geq 1$, then for sufficiently large $N$ we obtain 
\begin{equation*}
    D^2 \leq \dfrac{\log ^+|\alpha|_w+\dfrac{1}{\tau}}{1-C_{11}},
\end{equation*}
which is a contradiction for large $D$.
If $h(\alpha) <1$, then by a result of Dobrolowski \cite{dob}, we have
$h(\alpha) \geq \dfrac{C_{12}}{D(\log D)^3}$ for some constant $C_{12}>0$. Consequently, we have
\begin{align*}
   \frac{C_{12}}{D(\log D)^3}\left(1-\frac{1}{ND^{2.5}}\right)
   &\leq C_{11}+\frac{1}{ND^{1.5}}\left(\dfrac{d}{d-1} + \dfrac{1}{\tau}\right)+\frac{\log ^+|\alpha|_w+\dfrac{1}{\tau}}{D^2}.
\end{align*}
Then after simplifications, we deduce that
\begin{equation*}
\left (\log ^+|\alpha|_w+\dfrac{1}{\tau} \right) (\log D)^3 \geq D, 
\end{equation*}
which is impossible for large $D \geq 2$.
Hence, $|\mathcal{G}| < C_{10}$. This completes the proof of Theorem \ref{thm2}. \qed

\subsection{Proof of Theorem \ref{thm02}}
Suppose that $D$ is a positive integer and we consider a finite extension $L$ of $K$ of degree at most $D$. Let $S_L$ be the set of primes in $L$ lying over the primes in $S$. By Corollary \ref{cor3.3}, for each non-archimedean place $w$ of $K$, there exists at most one post-critically finite parameters $x \in \overline{\Q}$ such that $\log|x-\alpha|_v^{-1} > \log\dfrac{1}{r}$. We let $S_{\mbox{fin}}$ denote the subset of $S$ consisting exactly all non-archimedean places. Hence, up to at most $|S_{\mathrm{fin}}|$ exceptional $\mathrm{Gal}(\overline{K}/L)$-orbits, we assume that for every $x \in \mathcal{G}$ and every non-archimedean place $w$,
\begin{equation*}
\log |x-\alpha|_w^{-1} \le \log\dfrac{1}{r}.
\end{equation*}
Similarly, by Proposition \ref{archprop}, up to at most $|S \setminus S_\mathrm{fin}|$ exceptional $\text{Gal}(\overline{K}/L)$-orbits, we assume that for every $x \in \mathcal{G}$ and every archimedean place $w$, 
\begin{equation} \label{archbound}
     \max_{x \in \mathcal{G}} \log |x-\alpha|_w^{-1} \leq C_6  \left(h(\alpha)+ \frac{d}{d-1}\right)|\mathcal{G}|^{8+\epsilon}.
\end{equation}
Thus applying Proposition \ref{prop3.1} and Proposition \ref{prop3.2}, we assume \ref{archbound} for $w \in S_L$. We have from \eqref{sequi},
\begin{equation*}
 \left|\frac{1}{|\mathcal{G}|}\sum_{x \in \mathcal{G}} \lambda_{\alpha, w}(x)-\int \lambda_{\alpha, w}(x) d\mu_{d, w}\right|_w \le \frac{C_9}{|\mathcal{G}|^{1/2}}\left(h(\alpha)+\log ^+|\alpha|_w+ \dfrac{1}{\tau}+\dfrac{d}{d-1}\right).
\end{equation*} 
Summing up over all places in $S_L$ and for any constant $N > 0$ assuming that $ |\mathcal{G}| > C_{13}N|S_L|^3 D^8$ for some suitable constant $C_{13}$ gives,
\begin{align*}
    \sum_{w \in S_L}\Big|\frac{1}{|\mathcal{G}|}\sum_{x \in \mathcal{G}}N_w\lambda_{\alpha, w}(x)- \int N_w\lambda_{\alpha, w}(x) d\mu_{d, w}\Big|_w &\le \frac{h(\alpha) +\dfrac{1}{d-1}+ \dfrac{1}{\tau}}{ND^{4}}+ \frac{h(\alpha)}{N|S_L|D^{4}}\\ &\le \frac{h(\alpha) +\dfrac{d}{d-1}+ \dfrac{1}{\tau}}{ND^{4}}.
\end{align*}
Again by a similar argument used in the proof of Theorem \ref{thm2} leads to the following conclusion
\begin{equation} \label{conclusion}
 h(\alpha)\left(1-\frac{1}{ND^{4}}\right)\leq C_{11}+ \frac{\dfrac{d}{d-1}+ \dfrac{1}{\tau}}{ND^{4}}+\frac{\log ^+|\alpha|_w+\dfrac{1}{\tau}}{D^2}.
\end{equation} 
If $h(\alpha) \geq 1$, then for sufficiently large $N$ we obtain, 
\begin{equation*}
    D^2 \leq \dfrac{\log ^+|\alpha|_w+\dfrac{1}{\tau}}{1-C_{11}},
\end{equation*}
which is a contradiction for large $D$.
If $h(\alpha) <1$, then by a result of Dobrowolski \cite{dob}, we know $h(\alpha) > O\left(\dfrac{1}{D^{3/2}}\right)$. Then from \eqref{conclusion}, we will obtain
\begin{equation*}
    D^{0.5} < C_{14} \left( \log ^+|\alpha|_w+\dfrac{1}{\tau}\right),
\end{equation*}
which is a contradiction for large $D$. 
This completes the proof of Theorem \ref{thm02}.

%\textbf{Acknowledgments}
%The author thanks Prof. Su-Ion Ih for his useful comments in the preliminary version of this paper.

\end{document}